\documentclass[12pt]{amsart}
\topmargin0in
\usepackage{color,graphicx,amssymb,latexsym,amsfonts,amsmath,amsthm}
\usepackage{pstcol,pifont,geometry,amsmath}

\usepackage{pdfsync}

\oddsidemargin0.1in
\evensidemargin0.1in
\textwidth6.5in
\textheight8.9in



\input epsf
\newcommand{\s}{\vspace{0.3cm}}
\vfuzz2pt
\hfuzz2pt

\newtheorem{theo}{Theorem}
\newtheorem{prop}[theo]{Proposition}

\theoremstyle{remark}
\newtheorem{rema}[theo]{\bf Remark}

\begin{document}
\title{Field of moduli of generalized Fermat curves}
\author{Sebastian Reyes Carocca}
\thanks{Partially supported by project UTFSM 12.09.02}
\subjclass[2000]{14H37, 30F10}
\address{Departamento de Matem\'atica, Universidad T\'ecnica Federico Santa Mar\'{\i}a, Casilla 110-V Valparaiso, Chile}
\email{sebastian.reyes@usm.cl}

\begin{abstract}
As a consequence of the Riemann-Roch theorem, a closed Riemann
surface $S$ can be described by a non-singular complex projective
algebraic curve $C$. A field of definition for $S$ is any subfield
$D$ of $\mathbb{C}$ so that we may choose $C$ to be defined by
polynomials in $D[x_0, \ldots, x_n]$. The field of moduli of $S$ is
${\mathbb R}$ if and only if $S$ admits an anticonformal
automorphism. In the case that the field of moduli of $S$ is
${\mathbb R}$, then $S$ can be defined over the field of moduli if
and only if $S$ admits an anticonformal involution. It may happen
that the field of moduli is not a field of definition.

In this paper, we consider certain class of closed Riemann surfaces,
called generalized Fermat curves. These surfaces are the highest Abelian
branched cover of certain orbifolds. In this class of Riemann surfaces,
we study the problem of deciding when the field of moduli is ${\mathbb R}$
and when, in such a case, it is a field of definition.
\end{abstract}

\maketitle

\section{Introduction}
\vspace{0,5 cm} Let $S$ be a closed Riemann surface. As a
consequence of the Riemann-Roch theorem \cite{Farkas-Kra}, $S$ can
be described by a non-singular complex projective algebraic curve $C
\subset {\mathbb P}^{n}(\mathbb{C})$. If $C$ can be chosen to be
defined by homogeneous polynomials $f_{1},\ldots,f_{r} \in
D[x_{0},\ldots,x_{n}]$, where $D$ is a subfield of ${\mathbb C}$,
then we say that $D$ is a field of definition of $S$.

Let $C$ be defined by the homogeneous polynomials
$f_{1},\ldots,f_{r} \in {\mathbb C}[x_{0},\ldots,x_{n}]$, then the
complex conjugated curve $\overline{C}$ is the algebraic curve
defined by the homogeneous polynomials
$\widehat{f_{1}},\ldots,\widehat{f_{r}} \in {\mathbb
C}[x_{0},\ldots,x_{n}]$, where $\widehat{f_{j}}$ is obtained by
application of $\sigma(z)=\overline{z}$ to the coefficients of
$f_{j}$. The field of moduli of $S$ is then defined as ${\rm
M}(S)={\mathbb R}$ if $C$ and $\overline{C}$ are conformally
equivalent as closed Riemann surfaces. Note that the above
definition of ${\rm M}(S)$ does not depends on the choice of $C$. If
$J_{n}:{\mathbb P}^{n}(\mathbb{C}) \to {\mathbb P}^{n}(\mathbb{C})$
is the conjugation $J_n([x_{0} : \ldots : x_{n}])=[\overline{x_{0}}:
\ldots :\overline{x_{n}}]$, then $J_n:C \to \overline{C}$ defines an
anticonformal isomorphim (as closed Riemann surfaces). In this way,
the field of moduli of $S$ is ${\mathbb R}$ if and only if $C$
admits an anticonformal automorphism

If $S$ can be defined over ${\mathbb R}$, we also say that $S$ is a
real Riemann surface, then we may chose $C$ defined by polynomials
$f_{1}, \ldots, f_{r} \in {\mathbb R}[x_{0},\ldots,x_{n}]$. In this case,
$J_{n}$ defines an anticonformal involution on $S$. Conversely, as a
consequence of Weil's theorem \cite{Silhol, Weil}, if $S$ admits an
anticonformal involution, then $S$ can be defined over ${\mathbb
R}$.

We are interested on those closed Riemann surfaces whose field of
moduli is ${\mathbb R}$ and which can or cannot be definable over
${\mathbb R}$.

By the uniformization theorem, there is one conformal class of
Riemann surfaces of genus $0$; this given by the Riemann sphere
$\widehat{\mathbb C}$. This clearly has an anticonformal involution
($J(z)=\overline{z}$); it follows that a genus zero Riemann surface
has field of moduli equal to ${\mathbb R}$ and that it is real. A
closed Riemann surface of genus one can be described by an algebraic
curve of the form $C_{\lambda}:=\{y^{2}z=x(x-z)(x-\lambda z)\}
\subset {\mathbb P}^{2}(\mathbb{C})$, where $\lambda \in {\mathbb
C}-\{0,1\}$. If
$j(\lambda)=(1-\lambda+\lambda^2)^3/\lambda^2(\lambda-1)^{2}$ is its
$j$-invariant and $a(\lambda)=27j(\lambda)/(j(\lambda)-1)$, then
$C_{\lambda}$ is isomorphic to
$D_{\lambda}=\{y^{2}=4x^3-a(\lambda)x-a(\lambda)\}$; so ${\mathbb
Q}(j(\lambda))$ is a field of definition of $C_{\lambda}$. It can be
seen that $C_{\lambda}$ is real if and only if $j(\lambda)$ is real.

If $S$ has genus $g \geq 2$, then the situation gets more
complicate. The first examples of closed Riemann surfaces of genus
at least two which are not real and whose field of moduli is
${\mathbb R}$ where provided by Shimura \cite{Shimura} and Earle
\cite{Earle} around 1972. These examples where all hyperelliptic
Riemann surfaces (that is, there is a two-fold branched cover over
Riemann sphere). More recently, in \cite{Hidalgo} a
non-hyperelliptic non-real curve with field of moduli equal to
${\mathbb R}$ was provided. Such a non-hyperelliptic example
(depending on two real parameters) turns out to be the homology
cover of an orbifold with signature $(0;2,2,2,2,2,2)$, that is, a
closed Riemann surface $S$ of genus $17$ admitting a group $H \cong
{\mathbb Z}_{2}^{5}$ as a group of conformal automorphisms so that
$S/H$ is the Riemann sphere with exactly $6$ cone points, each one
of order $2$.

In this paper we consider those closed Riemann surfaces $S$
admitting a group $H \cong {\mathbb Z}_{p}^{n}$, where $p \geq 2$ is
a prime and $n \geq 2$, so that $S/H$ is an orbifold with signature
$(0;p,\stackrel{n+1}{\ldots},p)$. We study the problem of deciding
when such a surfaces have field of moduli equal to ${\mathbb R}$ and
when they are reals. \vspace{0,5 cm}

\section{Preliminaries}
\subsection{Riemann orbifolds and Fuchsian groups}

Let $S$ a closed Riemann surface. We will denote by $\mbox{Aut}(S)$
its full group of conformal automorphims. For $H$ subgroup of
$\mbox{Aut}(S)$, we denote by $\mbox{Aut}_H(S)$ the normalizer of
$H$ inside $\mbox{Aut}(S)$ and by $H'$ its conmutator subgroup.

A {\it Riemann orbifold} ${\mathcal O}$ of {\it signature}
$s({\mathcal O})=(\gamma : m_1,\ldots, m_r)$ is given by a closed
Riemann surface $S$ of genus $\gamma$ (called the underlying Riemann
surface structure of ${\mathcal O}$), a collection of $r$ different
points, say $p_{1},..., p_{r} \in S$ (called the cone points) and an
assignation of an integer $m_{j} \geq 2$ to the point $p_{j}$
(called the cone order of the cone point $p_{j}$). By a conformal
automorphism of a Riemann orbifold $\mathcal{O}$ we mean a conformal
automorphism of the underlying Riemann surface that preserve the
conic points and their orders. We denote by
$\mbox{Aut}_{Orb}(\mathcal{O})$ the conformal automophisms group of
$\mathcal{O}$.

By a \textit{fuchsian group} we mean a discrete subgroup of the
group $\mbox{Aut}(\mathbb{H}^{2}) \cong \mbox{PSL}(2, \mathbb{R})$ of
conformal automorphisms of the upper half plane. For details see
\cite{Maskit}. If $\Gamma$ is a co-compact fuchsian group, then
$\Gamma$ has a presentation in terms of $2\gamma$ hyperbolic
generators, say, $a_1,b_1, \ldots, a_{\gamma}, b_{\gamma}$ and $r$
eliptics, say $x_1,\ldots,x_r$, with the relations $$
[a_1,b_1]\cdots [a_{\gamma}, b_{\gamma}] \cdot x_1\cdots
x_r=x_1^{m_1}=\cdots =x_r^{m_r}=1
$$where $[a,b]=aba^{-1}b^{-1}$.  The {\it signature} of $\Gamma$ in this case is given by
$ s(\Gamma)=(\gamma : m_1,\ldots, m_r)$. In this case,
the quotient $\mathcal{O}=\mathbb{H}/\Gamma$ is a Riemann orfbifold with
$s({\mathcal O})=s(\Gamma)$.

By the classical uniformization theorem \cite{Farkas-Kra}, every
compact Riemann surface $S$ genus $g \geq 2$, can be realized as a
quotient $\mathbb{H}/\Gamma$ of the hyperbolic plane $\mathbb{H}$
under the action of a torsion free co-compact fuchsian group
$\Gamma$. We set $s(\Gamma)=(g: - )$ and say that $\Gamma$ is a
surface group.

A finite asbtract group $G$ acts as a group of automorphisms of
$S=\mathbb{H}/\Gamma$ if and only if, $G \cong \Lambda/\Gamma$ for
some fuchsian group $\Lambda$ that contains $\Gamma$ as a normal
subgroup index $|G|$; equivalently, if there exists an epimorphism
of groups $\Theta : \Lambda \longrightarrow G$  with $\Gamma =
\mbox{ker}(\Theta)$.

\subsection{Generalized Fermat curves}
Let $n,k \ge 2$ integers. A closed Riemann surface $S$ is called a
generalized Fermat curve of type $(k,n)$ if exists a subgroup $H <
\mbox{Aut}(S)$, $H \cong \mathbb{Z}_k^n$ (direct sum of $n$ copies
of $\mathbb{Z}_k$) so that $S/H$ is a Riemann orbifold of signature
$(0;k,\stackrel{n+1}{\ldots},k)$. We say that $H$ is a generalized
Fermat group of type $(k,n)$ and the pair $(S,H)$ is a generalized
Fermat pair of type $(k,n)$. By the Riemann-Hurwitz formula
\cite{Farkas-Kra}, the genus of a generalized Fermat curve of type
$(k,n)$ is
$$g(k,n)=1+ \frac{k^{n-1}}{2}((n-1)(k-1)-2).$$

Two pairs $(S_1,H_1)$ and $(S_2,H_2)$ of the same type are
topologically equivalent (conformally equivalent) if exists an
orientation-preserving homeomorphism (conformal homeomorphism)
$\varphi : S_1 \rightarrow S_2$ so that $\varphi^{-1} H_2
\varphi=H_1$.

The only non-hyperbolic generalized Fermat pairs are of type
$(2,2)$, $(2,3)$ and $(3,2)$. For example, if $(S,H)$ is a
generalized Fermat curve of type $(2,2)$ then $S$ is genus zero,
therefore $S$ is conformally equivalent to the Riemann sphere and
the generalized Fermat group $H \cong \mathbb{Z}_2 \oplus
\mathbb{Z}_2$ is generated for the transformations $z \mapsto -z$
and $z \mapsto 1/z$.

For hyperbolic cases, clasical uniformization theorem asserts that
the Riemann orbifold $S/H$ is uniformizated by a fuchsian group
$\Gamma < \mbox{Aut}(\mathbb{H}^2)$ whose presentation is $$ \Gamma
= \langle x_1, \ldots, x_{n+1} : x_1^k = \ldots = x_{n+1}^k = x_1
\cdots x_{n+1} =1 \rangle.$$

\s
\noindent
\begin{prop}[\cite{Fermat}]
Let $(S,H)$ a hyperbolic generalized Fermat curve of type $(k,n)$
and let $\Gamma$ be a orbifold univeral cover group of $S/H$. Then
$S$ is non-hyperelliptic and $(S,H)$ and  $(\mathbb{H}^2/  \Gamma',
\Gamma / \Gamma')$ are conformally equivalent generalized Fermat
pairs.
\end{prop}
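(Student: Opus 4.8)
The plan is to transfer everything to the uniformizing Fuchsian group $\Gamma$ and to prove the two assertions separately: that $(S,H)$ is conformally the pair $(\mathbb{H}^2/\Gamma',\Gamma/\Gamma')$, and that $S$ is non-hyperelliptic.

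For the conformal equivalence I would first abelianize the given presentation of $\Gamma$: one gets $\Gamma/\Gamma'\cong(\mathbb{Z}_k)^{n+1}/\langle(1,\ldots,1)\rangle\cong\mathbb{Z}_k^{n}$, a group of order $k^{n}$ in which (using $n+1\ge 3$) the image $\bar{x}_j$ of each elliptic generator still has order exactly $k$, and distinct $\langle\bar{x}_i\rangle$, $\langle\bar{x}_j\rangle$ intersect trivially. Recalling that in a Fuchsian group of signature $(0;k,\ldots,k)$ every element of finite order is conjugate to a power of some $x_j$, and that $\Gamma'$ is normal, it follows that $\Gamma'$ is torsion free; hence $\mathbb{H}^2/\Gamma'$ is a closed Riemann surface and $(\mathbb{H}^2/\Gamma',\Gamma/\Gamma')$ is itself a generalized Fermat pair of type $(k,n)$, its orbifold quotient being $\mathbb{H}^2/\Gamma\cong S/H$. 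Next I would view the branched covering $S\to S/H$ as an orbifold covering of $\mathbb{H}^2/\Gamma$: since it is Galois with deck group $H$ and $S$ is a smooth surface, it corresponds to a normal, torsion-free subgroup $\Lambda\triangleleft\Gamma$ with $\Gamma/\Lambda\cong H$ and $S\cong\mathbb{H}^2/\Lambda$. Because $\Gamma/\Lambda\cong\mathbb{Z}_k^{n}$ is abelian we have $\Gamma'\subseteq\Lambda$, and comparing indices $[\Lambda:\Gamma']=[\Gamma:\Gamma']/[\Gamma:\Lambda]=k^{n}/k^{n}=1$, so $\Lambda=\Gamma'$. This identifies $(S,H)$ with $(\mathbb{H}^2/\Gamma',\Gamma/\Gamma')$ as generalized Fermat pairs.

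For non-hyperellipticity I would argue by contradiction. In the hyperbolic range $(n-1)(k-1)\ge 3$ one checks that $g:=g(k,n)\ge 2$, so if $S$ were hyperelliptic it would carry a unique hyperelliptic involution $\iota$, necessarily central in $\mathrm{Aut}(S)$, with $S/\langle\iota\rangle\cong\widehat{\mathbb{C}}$ and with exactly $2g+2$ fixed points. I distinguish two cases. If $\iota\notin H$, then, $\iota$ being central, $H$ descends to an action on $S/\langle\iota\rangle\cong\widehat{\mathbb{C}}$; this action is faithful, since for $h$ in its kernel the fixed-point sets of $h$ and of $\iota h$ cover $S$, forcing one of them to be all of $S$, hence $h\in\{\mathrm{id},\iota\}$ and so $h=\mathrm{id}$. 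But a finite abelian subgroup of $\mathrm{Aut}(\widehat{\mathbb{C}})\cong\mathrm{PGL}(2,\mathbb{C})$ is cyclic or a Klein four-group, while $\mathbb{Z}_k^{n}$ with $n\ge 2$ is neither, the sole exception $(k,n)=(2,2)$ being excluded because $(S,H)$ is hyperbolic. If $\iota\in H$, then by the description of $\Gamma'$ above an element of $H=\Gamma/\Gamma'$ can have a fixed point on $S$ only if it lies in some $\langle\bar{x}_j\rangle$; hence $k$ is even, $\iota=\bar{x}_j^{\,k/2}$ for a unique $j$, and $\iota$ fixes precisely the $k^{n-1}$ points of $S$ over the $j$-th cone point and no others. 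Equating $k^{n-1}=2g+2$ and using the genus formula yields $k^{n-1}(3-(n-1)(k-1))=4$, which is impossible once $(n-1)(k-1)\ge 3$. Both cases being contradictory, $S$ is non-hyperelliptic.

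The hardest point is the case $\iota\notin H$ of the non-hyperellipticity argument; the whole thing hinges on the observation that a central hyperelliptic involution would force the entire generalized Fermat group to act faithfully on the rational quotient $S/\langle\iota\rangle$, which a non-cyclic, non-Klein-four abelian group cannot do. Once this is in hand the remaining case is a fixed-point count closed off by the genus formula, and the conformal-equivalence statement reduces to bookkeeping with the abelianization together with an index comparison.
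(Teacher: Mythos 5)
Your argument is correct: the abelianization/index computation (torsion-freeness of $\Gamma'$, $\Gamma'\subseteq\Lambda$ by abelianity of $H$, equality by counting $[\Gamma:\Gamma']=k^{n}=[\Gamma:\Lambda]$) correctly identifies $(S,H)$ with the homology cover $(\mathbb{H}^2/\Gamma',\Gamma/\Gamma')$, and the two-case analysis of a putative central hyperelliptic involution $\iota$ (faithful action of $\mathbb{Z}_k^n$ on $\widehat{\mathbb{C}}$ when $\iota\notin H$, fixed-point count $k^{n-1}=2g+2$ when $\iota\in H$) rules out hyperellipticity in the hyperbolic range $(n-1)(k-1)>2$. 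The paper itself offers no proof of this proposition, quoting it from the reference on generalized Fermat curves, and your argument is essentially the standard one found there, so there is nothing to flag.
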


\s

Let us consider a generalized Fermat pair $(S,H)$ of type $(k,n)$.
Let us assume, after a M\"obius transformation, that the cone locus
of the orbifold $S/H$ is given by
$$\infty, 0, 1, \lambda_{1},\ldots, \lambda_{n-2}.$$

Consider the non-singular projective algebraic curve in
$\mathbb{P}^n(\mathbb{C})$ defined by
$$C(\lambda_1,\ldots,\lambda_{n-2}; k) :  \left \{ \begin{array}{lllllll}
x_1^k & + & x_2^k & + & x_3^k & =  & 0\\
\lambda_1 x_1^k & + & x_2^k & + & x_4^k & =  & 0\\
\lambda_2 x_1^k & + & x_2^k & + & x_5^k & =  & 0\\
\vdots & \; & \vdots & \; &  \vdots  &  \; & \vdots \\
\lambda_{n-2} x_1^k & + & x_2^k & + & x_{n+1}^k & =  & 0\\\end{array} \right\}.$$

As $\lambda_j \in \mathbb{C}-\{0,1\}$, $\lambda_i \neq \lambda_j$ if
$i \neq j$, it can seen that $C(\lambda_1,\ldots,\lambda_{n-2}; k)$
is a non-singular algebraic curve; so it describes a closed Riemann
surface. We note that the projective linear transformations
$$a_j([x_1: \cdots :x_{n+1}])  = [x_1:\cdots:
x_{j-1}:\exp\{2\pi i/k\} x_j:x_{j+1}:\cdots:x_n : x_{n+1}], $$ with
$j \in \{ 1,\cdots,n \}$ provides a faithful representation
$$\Theta : \mathbb{Z}_k^n \hookrightarrow
\mbox{Aut}(C(\lambda_1,\ldots,\lambda_{n-2}; k)).$$In addition, the
$k^n$ degree conformal map
$$\pi :C(\lambda_1,\ldots,\lambda_{n-2} ; k) \rightarrow \widehat{\mathbb{C}}$$
defined by $\pi ([x_1:\cdots:x_{n+1}])= - (x_2/x_1)^k$, satisfies $
\pi \circ a_j = \pi$ for every $1\leq j \leq n$. We note that
$H_0:=\Theta(\mathbb{Z}_k^n)$ is a generalized Fermat group of type
$(k,n)$ for the closed Riemann surface
$C(\lambda_1,\ldots,\lambda_{n-2}; k)$.

\s
\noindent
\begin{prop}[\cite{Fermat}]
The generalized Fermat pairs
$(S,H)$ and  $(C(\lambda_1,\ldots,\lambda_{n-2} ; k) , H_{0})$
are conformally equivalent.
\end{prop}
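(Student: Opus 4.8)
The plan is to exhibit an explicit isomorphism of generalized Fermat pairs between the uniformization model $(\mathbb{H}^2/\Gamma', \Gamma/\Gamma')$ coming from Proposition 2 (the earlier proposition) and the algebraic model $(C(\lambda_1,\ldots,\lambda_{n-2};k), H_0)$, and to identify the two along the branched covers they carry over $\widehat{\mathbb{C}}$. First I would note that by the preceding proposition it suffices to treat the case $(S,H) = (\mathbb{H}^2/\Gamma', \Gamma/\Gamma')$, where $\Gamma = \langle x_1,\ldots,x_{n+1} : x_1^k = \cdots = x_{n+1}^k = x_1\cdots x_{n+1} = 1\rangle$ and the orbifold $S/H = \mathbb{H}^2/\Gamma$ has cone locus $\infty, 0, 1, \lambda_1,\ldots,\lambda_{n-2}$ after the normalizing M\"obius transformation. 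The quotient map $S \to S/H \cong \widehat{\mathbb{C}}$ is then a regular branched cover with deck group $H \cong \mathbb{Z}_k^n$, branched over exactly these $n+1$ points, each with local monodromy of order $k$; the monodromy representation $\pi_1^{orb}(S/H) \to H$ is, up to the relation coming from $\Gamma/\Gamma'$, determined by sending the $j$-th loop to the image of $x_j$.

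Next I would observe that $C(\lambda_1,\ldots,\lambda_{n-2};k)$ carries, via the map $\pi([x_1:\cdots:x_{n+1}]) = -(x_2/x_1)^k$, a regular branched cover of $\widehat{\mathbb{C}}$ with deck group $H_0 \cong \mathbb{Z}_k^n$; one checks directly that $\pi$ has degree $k^n$, that it is unramified away from the fibers over $\infty, 0, 1, \lambda_1,\ldots,\lambda_{n-2}$ (using $\lambda_i \neq 0,1$ and $\lambda_i \neq \lambda_j$), and that the ramification over each of these points is total of order $k$. The key computation here is to determine, for each branch point, the element of $H_0 \cong \mathbb{Z}_k^n$ generated by a small loop: for instance, over $0$ we have $x_2 = 0$ on the fiber, and the system forces $x_3^k = x_4^k = \cdots = x_{n+1}^k = 0$... — rather, I must be careful: on the fiber the coordinates that vanish pick out which $a_j$ generate the stabilizer. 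Carrying this out shows the local monodromy data of $\pi$ agrees (after the appropriate identification $\mathbb{Z}_k^n \cong H$ matching generators $a_j \leftrightarrow$ image of $x_j$ and using $a_1\cdots a_n \cdot (\text{monodromy at } \infty) = 1$, mirroring $x_1\cdots x_{n+1}=1$) with the monodromy data of $S \to S/H$.

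Having matched the branch loci (both are $\{\infty,0,1,\lambda_1,\ldots,\lambda_{n-2}\}$, a consequence of the normalization) and the monodromy representations up to an automorphism of $\mathbb{Z}_k^n$, I would invoke the uniqueness of regular branched covers of an orbifold with prescribed deck group and monodromy (equivalently, the Galois correspondence for the orbifold fundamental group / Riemann existence theorem): the cover $S \to \widehat{\mathbb{C}}$ corresponds to the surface subgroup $\Gamma' \triangleleft \Gamma$, and the cover $C(\lambda_1,\ldots,\lambda_{n-2};k) \to \widehat{\mathbb{C}}$ corresponds to the kernel of $\Theta$ restricted suitably; since both kernels are the same normal subgroup of $\Gamma$ (they are the unique one with quotient $\mathbb{Z}_k^n$ realizing the given monodromy, and $[\Gamma,\Gamma] = \Gamma'$ is characteristic), there is a conformal homeomorphism $\varphi : \mathbb{H}^2/\Gamma' \to C(\lambda_1,\ldots,\lambda_{n-2};k)$ covering the identity on $\widehat{\mathbb{C}}$, and such a $\varphi$ automatically conjugates $H = \Gamma/\Gamma'$ to $H_0$. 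This gives the conformal equivalence of generalized Fermat pairs.

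The main obstacle I anticipate is the bookkeeping in the monodromy computation: one must check that the $n+1$ local monodromy generators attached to $\infty, 0, 1, \lambda_1,\ldots,\lambda_{n-2}$ in the algebraic model really do generate all of $\mathbb{Z}_k^n$, satisfy exactly the one relation (product equal to identity, $k$-th powers trivial), and match up — under a \emph{single} fixed isomorphism $\mathbb{Z}_k^n \cong H$ — with those of the Fuchsian model; a subtlety is that this isomorphism need not be the naive one $a_j \mapsto x_j \Gamma'$ but may involve the change of basis dictated by the relation $x_1\cdots x_{n+1}=1$, and one should verify the covering group $\Gamma/\Gamma'$ is abelian (equal to $H^{ab}$ of the free-product-like presentation), so that $\Gamma'$ is indeed the commutator subgroup and the classification of abelian covers applies cleanly. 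Once the monodromy data is confirmed to coincide, the rest is a formal appeal to Riemann's existence theorem and the naturality of the deck transformation action.
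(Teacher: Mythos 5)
Your argument is correct in substance, but it is considerably heavier than what is needed, and the paper itself offers no proof at all: the proposition is quoted from \cite{Fermat}, and within the paper's own setup the intended argument is essentially a two--line application of the \emph{preceding} proposition. Indeed, the map $\pi([x_1:\cdots:x_{n+1}])=-(x_2/x_1)^k$ exhibits $C(\lambda_1,\ldots,\lambda_{n-2};k)/H_0$ as the sphere with the \emph{same} cone points $\infty,0,1,\lambda_1,\ldots,\lambda_{n-2}$, each of order $k$, so $(C(\lambda_1,\ldots,\lambda_{n-2};k),H_0)$ is a generalized Fermat pair with the same orbifold uniformizing group $\Gamma$; applying the previous proposition to both pairs gives $(S,H)\cong(\mathbb{H}^2/\Gamma',\Gamma/\Gamma')\cong(C(\lambda_1,\ldots,\lambda_{n-2};k),H_0)$. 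Your route through the Riemann existence theorem reconstructs exactly this, but the monodromy bookkeeping you worry about in your final paragraph is unnecessary: since $\mathbb{Z}_k^n$ is abelian of exponent $k$ and order $k^n=|\Gamma/\Gamma'|$, \emph{every} epimorphism $\Gamma\to\mathbb{Z}_k^n$ has kernel containing $\Gamma'$ and hence equal to $\Gamma'$ by counting, so the two covers are determined by the same normal subgroup of $\Gamma$ regardless of how generators are matched; no choice of isomorphism $\mathbb{Z}_k^n\cong H$ and no automorphism of $\mathbb{Z}_k^n$ need be normalized (and the remark that $\Gamma'$ is characteristic is beside the point). The computational slip you flag yourself should simply be discarded: on the fiber over $0$ one has $x_2=0$, the equations give $x_3^k=-x_1^k$ and $x_j^k=-\lambda_{j-3}x_1^k$ for $j\geq 4$ (all nonzero since $\lambda_i\neq 0$), so the stabilizer is exactly $\langle a_2\rangle$; the only facts the verification must deliver are that $\pi$ is unramified off the $n+1$ cone points, totally ramified of order $k$ over each of them, and that $C$ is connected, so that $\pi$ is a regular $H_0$-cover of the orbifold and the covering-space argument applies. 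With those points granted, your proof is a valid, if longer, alternative to the citation.
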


\s

We have that $ \mbox{Aut}_{H_0}(C(\lambda_1,\ldots,\lambda_{n-2} ; k))/H_0
 $ is a group isomorphic to the subgroup
of $\mbox{PSL}(2, \mathbb{C})$ that preserves the conic set $\{
\infty, 0, 1, \lambda_1,\ldots,\lambda_{n-2} \}$. It follow that, if
$H_0$ is unique (or normal) inside $\mbox{Aut}(S)$, then

$$
\frac{ \mbox{Aut}(C(\lambda_1,\ldots,\lambda_{n-2} ; k))}{H_0 }
\cong \mbox{Aut}_{Orb}(S/H_0)
$$

\s
\noindent
\begin{rema} \label{notalift}
We note that if $H_0$ is normal, then we can obtain $\mbox{Aut}(S)$
lifting $\mbox{Aut}_{Orb}(\mathcal{O})$. In fact, if $f \in
\mbox{Aut}_{Orb}(S/H)$, then exists $\widehat{f} \in \mbox{Aut}(S)$
so that $\pi \widehat{f} = f \pi$. In \cite{Fermat} they note that
if $f$ induces the permutation $\sigma \in \mathbf{S}_{n+1}$ in the
set $\{\mu_1= \infty, \mu_2=0, \mu_3=1, \mu_4 =\lambda_1, \ldots,
\mu_{n+1} = \lambda_{n-2}\}$, then $\widehat{f}$ is defined by $$
\widehat{f}([x_1: \ldots : x_{n+1}])=[c_1x_{\sigma^{-1}(1)}: \ldots
: c_{n+1}x_{\sigma^{-1}(n+1)} ] $$where the complex constants $c_j$
can be easily computed using the algebraic equations that defines
the curve. (for simplicty, we may assume $c_1=1$).
\end{rema}

\s

The next results will be important in the proofs of the main results
of this paper.

\s
\noindent
\begin{theo}[\cite{humbert}]\label{(2,4)}
Let $(S,H)$ a generalized Fermat pair of type $(2,4)$. Then $H$ in
unique inside $\mbox{Aut}(S)$.
\end{theo}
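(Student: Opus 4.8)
The plan is to analyze the full automorphism group of a generalized Fermat curve $(S,H)$ of type $(2,4)$ by studying how any generalized Fermat group of type $(2,4)$ sits inside $\mathrm{Aut}(S)$, and to show that all such subgroups coincide. First I would fix the model $C=C(\lambda_1,\lambda_2;2)\subset\mathbb{P}^4(\mathbb{C})$ with $H_0=\Theta(\mathbb{Z}_2^4)$ as in the preliminaries; by Proposition \ref{...} (the conformal equivalence result of \cite{Fermat}) it suffices to show that $H_0$ is the unique subgroup of $\mathrm{Aut}(C)$ isomorphic to $\mathbb{Z}_2^4$ realizing the quotient orbifold of signature $(0;2,2,2,2,2)$. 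Here $C$ is a non-hyperelliptic curve of genus $g(2,4)=1+2^{3}\bigl(3\cdot 1-2\bigr)/2\cdot\ldots$, i.e.\ genus $\,1+4\cdot(3-2)=\,$ the value given by the Riemann--Hurwitz formula; I would record that $C$ has genus $5$ (from $g(k,n)=1+\tfrac{k^{n-1}}{2}((n-1)(k-1)-2)$ with $k=2,n=4$: $g=1+4((3)(1)-2)=5$), which is small enough that the possible automorphism groups are constrained.

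The main step is a fixed-point / ramification analysis. Suppose $H_1\cong\mathbb{Z}_2^4$ is another generalized Fermat group of type $(2,4)$ in $\mathrm{Aut}(S)$. I would consider the subgroup $H_0\cap H_1$ and the group $H_0 H_1$ (a $2$-group inside $\mathrm{Aut}(S)$, though not necessarily elementary abelian), and bound its order using the genus-$5$ constraint together with the Riemann--Hurwitz formula applied to $S\to S/\langle H_0,H_1\rangle$. The key observation is that each non-trivial element of a generalized Fermat group of type $(2,k)$ acts with a prescribed number of fixed points (these involutions are exactly the elements whose quotient orbifold data force $2^{n}$-fold branching over $n+1$ points), so the set of involutions of $S$ with the ``generalized Fermat'' fixed-point behavior is rigidly determined. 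I would then show that any two elementary abelian $2$-subgroups of rank $4$ with the correct branching must have the same set of involutions, hence be equal: concretely, an element of $H_1\setminus H_0$ would have to normalize $H_0$ (because $H_0$ is already known to have normalizer of controlled index via $\mathrm{Aut}_{H_0}(C)/H_0\cong$ the M\"obius stabilizer of the five-point set), and then the induced action on $S/H_0\cong\widehat{\mathbb{C}}$ together with the lifting description in Remark \ref{notalift} forces it to lie in $H_0$.

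I expect the main obstacle to be ruling out ``exotic'' configurations where $H_1$ is not contained in the normalizer of $H_0$ — i.e.\ where $\langle H_0,H_1\rangle$ is a larger $2$-group acting on $S$ with a quotient of genus $0$. This is where one genuinely needs the arithmetic of genus $5$: one must show that $S$ admits no action of a $2$-group of order exceeding $2^{4}\cdot(\text{M\"obius factor})$ compatible with two distinct rank-$4$ elementary abelian subgroups, which I would handle by enumerating the possible signatures $(0;\ldots)$ of $S/P$ for $2$-groups $P$ via Riemann--Hurwitz and checking that none accommodates two independent copies of $\mathbb{Z}_2^4$ with the required branching over five points. Once that case is excluded, uniqueness follows, and then $H$ being normal (indeed characteristic) in $\mathrm{Aut}(S)$ is immediate since any automorphism conjugates $H$ to another generalized Fermat group of type $(2,4)$, which must be $H$ itself.
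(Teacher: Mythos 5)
First, note that the paper you are trying to reproduce does not actually prove this statement: Theorem \ref{(2,4)} is quoted from the reference \cite{humbert} (Carocca--Gonz\'alez--Hidalgo--Rodr\'iguez, \emph{Generalized Humbert curves}), so there is no in-paper argument to compare against; your proposal has to stand on its own, and as written it does not. The decisive gap is the one you yourself flag as ``the main obstacle'': the case in which a second group $H_1\cong\mathbb{Z}_2^4$ does not normalize $H_0$, so that $\langle H_0,H_1\rangle$ is some larger group. You say you would handle it ``by enumerating the possible signatures $(0;\ldots)$ of $S/P$ via Riemann--Hurwitz,'' but that enumeration is precisely the mathematical content of the theorem and is nowhere carried out; a proof plan that defers exactly the hard case is not a proof. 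Moreover, two of the reductions you rely on before reaching that case are unjustified or false. The set $H_0H_1$ need not be a group, and $\langle H_0,H_1\rangle$, being generated by two $2$-subgroups, need not be a $2$-group at all, so the restriction of your Riemann--Hurwitz search to $2$-groups $P$ is not legitimate without further argument. And the assertion that an element of $H_1\setminus H_0$ ``would have to normalize $H_0$ because $H_0$ has normalizer of controlled index'' is a non sequitur: knowing the index or structure of $N_{\mathrm{Aut}(S)}(H_0)$ says nothing about whether a given outside element lies in it; indeed, if you could show $H_1\subset N_{\mathrm{Aut}(S)}(H_0)$ cheaply, the whole difficulty would evaporate.

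A secondary inaccuracy: it is not true that ``each non-trivial element of a generalized Fermat group of type $(2,k)$ acts with a prescribed number of fixed points.'' For type $(2,4)$ the group $H\cong\mathbb{Z}_2^4$ has $15$ involutions, of which exactly the $5$ standard generators $a_1,\ldots,a_5$ (with $a_1a_2a_3a_4a_5=1$) have fixed points, namely $2^{3}=8$ each, while the remaining $10$ act freely. The correct version of your ``rigidity'' idea is to characterize intrinsically, inside $\mathrm{Aut}(S)$ for the genus-$5$ non-hyperelliptic curve $S$, those conformal involutions having $8$ fixed points (equivalently, quotient of genus $1$ with $8$ branch points), show that any generalized Fermat group of type $(2,4)$ is generated by five such involutions, and then show that this distinguished set of involutions generates a single subgroup. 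That is essentially the route taken in \cite{humbert}, and it requires a genuine analysis of involutions on genus-$5$ curves (hyperelliptic versus elliptic-hyperelliptic behavior, counts of fixed points, and the structure of the subgroup they generate), none of which appears in your outline. As it stands the proposal is a plausible strategy sketch, not a proof.
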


\s
\noindent
\begin{theo}[\cite{Fermat}]\label{unico}
Let $S$ a generalized Fermat curve of type $(p,n)$ with $p\ge 2$
prime and $n \ge 2$ so that $(n-1)(p-1)>2$. If $H_1$ and $H_2$ are
two generalized Fermat groups of type $(p,n)$ then they are
conjugate inside $\mbox{Aut}(S)$.
\end{theo}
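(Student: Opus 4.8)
The plan is to show that any two generalized Fermat groups $H_1, H_2 \cong \mathbb{Z}_p^n$ inside $\mathrm{Aut}(S)$ are conjugate by exhibiting a canonical subgroup of $\mathrm{Aut}(S)$ that must contain \emph{every} such group, and then invoking Sylow theory. First I would set $G=\mathrm{Aut}(S)$, which is finite since $g(p,n)\ge 2$ under the hypothesis $(n-1)(p-1)>2$. The key observation is that a generalized Fermat group $H$ of type $(p,n)$ is an elementary abelian $p$-group of rank $n$, and by the Riemann–Hurwitz computation recalled in the excerpt it realizes a specific signature $(0;p,\stackrel{n+1}{\ldots},p)$ on the quotient; in particular $|H|=p^n$ and $H$ acts with the maximal possible number of branch points for its order. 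I would first argue that $H$ is contained in a Sylow $p$-subgroup $P$ of $G$, and then show that in fact $|P|=p^n$, i.e. $H=P$ is itself Sylow. If that holds, then $H_1$ and $H_2$ are both Sylow $p$-subgroups of $G$, hence conjugate by Sylow's theorem, and the proof is complete.

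The heart of the matter is therefore the bound $|P|=p^n$ on the $p$-part of $|\mathrm{Aut}(S)|$. To get this I would argue by contradiction: suppose $H$ is properly contained in a $p$-group $P\le G$. Since $H\cong\mathbb{Z}_p^n$ is abelian and properly contained in the $p$-group $P$, its normalizer in $P$ strictly contains $H$, so there is an element $t\in N_P(H)\setminus H$ of order $p$ normalizing $H$; thus $\langle H, t\rangle$ is a $p$-group of order $p^{n+1}$ acting on $S$. Now I would push this down to the orbifold level: $t$ induces a conformal automorphism $\bar t$ of the quotient orbifold $\mathcal{O}=S/H$ of signature $(0;p,\stackrel{n+1}{\ldots},p)$ (using that $H\trianglelefteq\langle H,t\rangle$, so $\langle H,t\rangle/H$ acts on $S/H$), and $\bar t$ has order dividing $p$. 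The underlying surface of $\mathcal{O}$ is $\widehat{\mathbb{C}}$ with $n+1$ marked cone points of order $p$, and $\bar t\in\mathrm{PSL}(2,\mathbb{C})$ must permute these $n+1$ points. An order-$p$ Möbius transformation has exactly two fixed points on $\widehat{\mathbb{C}}$, so its orbits on the $n+1$ cone points have size $1$ or $p$; comparing this with a Riemann–Hurwitz / Euler-characteristic count for the further quotient $\mathcal{O}/\langle\bar t\rangle$ — together with the constraint that this quotient is again a sphere-orbifold — I would derive a numerical contradiction with the hypothesis $(n-1)(p-1)>2$, which is precisely the condition ensuring $\mathcal{O}$ is hyperbolic and ``rigid enough''. (One must also separately dispose of the case where $\bar t$ is trivial, i.e. $t$ centralizes the action on $S/H$ and hence $\langle H,t\rangle$ still has quotient orbifold of the same signature but larger deck group, contradicting $|H|=p^n$ being the full branched-cover degree — this follows from Proposition \ref{notalift}-type lifting arguments or directly from the fact that the cover $S\to S/H$ is the highest abelian cover, so no larger group can realize the same quotient orbifold with the same branch data.)

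The step I expect to be the main obstacle is the numerical/combinatorial argument ruling out an order-$p$ Möbius transformation $\bar t$ acting on the $(n+1)$-punctured sphere-orbifold in a way compatible with extending to a $p$-group action: one has to handle the cases $p=2$ and $p$ odd, the possibilities for how many cone points are fixed versus permuted in orbits of size $p$, and check that in every case either the quotient fails to be a valid orbifold or the genus/Euler-characteristic bookkeeping forces $(n-1)(p-1)\le 2$. A cleaner alternative, which I would pursue if the direct count gets unwieldy, is to quote Theorem \ref{(2,4)} for the base case $(p,n)=(2,4)$ and related uniqueness results, and otherwise use the classification of finite subgroups of $\mathrm{PSL}(2,\mathbb{C})$: the group $\mathrm{Aut}_{Orb}(\mathcal{O})$ is a finite subgroup of $\mathrm{PSL}(2,\mathbb{C})$ preserving a set of $n+1\ge 3$ points (when $(n-1)(p-1)>2$ forces $n\ge3$ or $p\ge3$ suitably), hence is cyclic, dihedral, or one of $A_4,S_4,A_5$; in all of these the only normal $p$-subgroup-complement behaviour one needs is that $|\mathrm{Aut}_{Orb}(\mathcal{O})|$ has small $p$-part, which combined with $|\mathrm{Aut}(S)|=|H_0|\cdot|\mathrm{Aut}_{Orb}(\mathcal{O})|$ (valid once we know $H_0$ is normal — but that is what we are proving, so this must be bootstrapped carefully) again pins down the $p$-part of $|\mathrm{Aut}(S)|$ as exactly $p^n$. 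Either way, once the Sylow $p$-subgroup is shown to have order $p^n$, Sylow's conjugacy theorem finishes the argument immediately.
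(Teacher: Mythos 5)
Your strategy stands or falls on the claim that the $p$-part of $|\mbox{Aut}(S)|$ is exactly $p^{n}$, i.e.\ that a generalized Fermat group $H$ is itself a Sylow $p$-subgroup, and that claim is false in general. Take $p=3$, $n=5$ and place the $n+1=6$ cone points of $S/H$ at $\{1,\omega,\omega^{2},\lambda,\lambda\omega,\lambda\omega^{2}\}$ with $\omega=e^{2\pi i/3}$ and $\lambda$ generic; here $(n-1)(p-1)=8>2$, so the hypotheses of the theorem hold. The rotation $z\mapsto \omega z$ is a conformal orbifold automorphism of order $p=3$ of $S/H$, and since $S$ is the homology cover of $S/H$ it lifts to $\mbox{Aut}(S)$, producing a $3$-subgroup of order $3^{6}>3^{5}=|H|$. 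So $H$ is properly contained in a Sylow $3$-subgroup, and the ``numerical contradiction'' you hope to extract from an order-$p$ element $\bar t$ normalizing the picture simply does not exist: the quotient of the sphere-orbifold by such a rotation is again a perfectly admissible sphere-orbifold (the two fixed points of the rotation just acquire cone order $p$), and no Riemann--Hurwitz bookkeeping conflicts with $(n-1)(p-1)>2$. The same phenomenon occurs for $p=2$ (e.g.\ five cone points $\{0,\lambda,-\lambda,\mu,-\mu\}$ invariant under $z\mapsto -z$ for a Humbert curve of type $(2,4)$). Note also that even knowing $H_{1},H_{2}$ lie in a common Sylow $p$-subgroup would not help: subgroups of equal order inside one Sylow subgroup need not be conjugate, so Sylow's theorem only closes the argument in the (false) case $H=P$.

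Your fallback route has the same problem: the orbifold group $\mbox{Aut}_{Orb}(S/H)$, being cyclic, dihedral, $A_{4}$, $S_{4}$ or $A_{5}$, can certainly have order divisible by $p$ (cyclic and dihedral groups of order divisible by any $p$ occur, as above), so the $p$-part of $|\mbox{Aut}(S)|$ is not pinned at $p^{n}$ this way either, quite apart from the normality bootstrap you already flag. The defining property of a generalized Fermat group is not its isomorphism type $\mathbb{Z}_{p}^{n}$ but the signature $(0;p,\stackrel{n+1}{\ldots},p)$ of its quotient, and any correct proof must use that. Indeed the present paper does not prove the statement at all --- it quotes it from \cite{Fermat} --- and the argument there rests on a finer analysis of the curve itself (the fixed-point structure of the order-$p$ automorphisms in a generalized Fermat group and the fact that $S\to S/H$ is the highest abelian branched cover), which identifies the generators of any such group intrinsically rather than by order counting. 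To repair your proposal you would have to replace the Sylow step by an argument of that kind.
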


\s
\subsection{Action of the Galois group} Let $F < E$ an extension fields
and consider the Galois group $\mbox{Gal}(E/F)$ asociated to the extension.
$\mbox{Gal}(E/F)$ acts in the polynomial ring $E[x_0 , \ldots , x_n]$:
if
$$f(x_0 , \ldots, x_n)=  \sum a_{i_0 \cdots i_n}x_0^{i_0}\cdots x_n^{i_n}$$
then $$(\sigma \cdot f )(x_0 , \ldots, x_n)= \sum \sigma \left( a_{i_0 \cdots i_n} \right)       x_0^{i_0}\cdots x_n^{i_n}
$$Set $f^{\sigma}:=\sigma \cdot f$. This accion induces an action in
the set of projective algebraic varieties. If $X$ is defined by $f_1
, \ldots, f_r$, then we can consider the polynomials $f_1^{\sigma} ,
\ldots, f_r^{\sigma}$ that defines a new projective algebraic
variety; say $X^{\sigma}$.

\subsection{Field of Moduli and Fields of Definition}
Let $F<E$ an extension fields and let $X \subset \mathbb{P}^n(E)$
be a projective algebraic variety.

The field of moduli $\mbox{M}_{E/F}(X)$ of $X$, asociated to the
extension $F<E$, is defined as the fixed field of the subgroup
$$E_F(X)= \{\, \sigma \in \mbox{Gal}(E/F): X \simeq X^{\sigma} \},$$
where ``$\simeq$" means birational isomorphism.
It is clear from the definition that $F< M_{E/F}(X)<E$.

A field of definition for $X$ is any subfield $D$, $F<D<E$, so that
exists $Y \simeq X$ defined by polynomials in $D[x_0, \ldots, x_n]$.
It is clear from the definition that if $D$ is a field of definition
for $X$, then every extension of $D$ (inside $E$) is also a field of
definition, nevertheless, it is not clear that there is a  smallest
field of definition.

If $F < E$ is a general Galois extension (i.e. for every $F < N < E$
holds that $\mbox{Fix}(\mbox{Gal}(E/N)=N$), then is known that the
field of moduli is contained in every field of definition. We are
interested in the Galois extension $\mathbb{R} < \mathbb{C}$. As a
Galois extension is  a general Galois extension, the previuos result
holds. The main result here is a theorem due by A. Weil in 1956, see
\cite{Weil}. We present a simplicated version of this theorem
(sufficient for our porpouse).

\s
\noindent
\begin{theo}[Weil's Theorem]
Let $F < E$ a finite Galois extension, $X \subset \mathbb{P}^n(E)$
be a projective algebraic variety and $F=M_{E/F}(X)$. Then $F$ is a
field of definition for $X$ if and only if for every $\sigma \in
E_F(X)= \mbox{Gal}(E/F)$ there exists  a birational isomorphism
$f_{\sigma} : X \rightarrow X^{\sigma}$ so that for every $ \sigma,
\tau \in \mbox{Gal}(E/F)$ it holds that $f_{\sigma
\tau}=f_{\tau}^{\sigma} \circ f_{\sigma}$.
\end{theo}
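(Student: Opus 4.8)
The plan is to prove the two implications separately. Note first that the hypothesis $F=M_{E/F}(X)$ says exactly that $E_{F}(X)=\mbox{Gal}(E/F)$, i.e.\ a birational isomorphism $X\to X^{\sigma}$ exists for \emph{every} $\sigma$; the content of the theorem is whether they can be chosen \emph{coherently}. For necessity, assume $F$ is a field of definition and fix a birational isomorphism $\phi:X\to Y$ with $Y$ cut out by polynomials over $F$. Since $F=\mbox{Fix}(\mbox{Gal}(E/F))$, each $\sigma$ fixes those coefficients, so $Y^{\sigma}=Y$, and hence $f_{\sigma}:=(\phi^{\sigma})^{-1}\circ\phi$ is a well-defined birational isomorphism $X\to X^{\sigma}$ (the target being correct because $\phi^{\sigma}:X^{\sigma}\to Y^{\sigma}=Y$). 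Using that $g\mapsto g^{\sigma}$ is functorial on morphisms and that $(\phi^{\tau})^{\sigma}=\phi^{\sigma\tau}$ for the usual left action, one computes $f_{\tau}^{\sigma}\circ f_{\sigma}=(\phi^{\sigma\tau})^{-1}\circ\phi^{\sigma}\circ(\phi^{\sigma})^{-1}\circ\phi=(\phi^{\sigma\tau})^{-1}\circ\phi=f_{\sigma\tau}$, which is the required relation; taking $\sigma=\tau=1$ also gives $f_{1}=\mbox{id}_{X}$, used below.

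For sufficiency, given a coherent family $(f_{\sigma})$, write $\mbox{Gal}(E/F)=\{\sigma_{1}=1,\ldots,\sigma_{m}\}$ and form
$$\Phi:=(f_{\sigma})_{\sigma\in\mbox{Gal}(E/F)}:X\longrightarrow\prod_{\sigma\in\mbox{Gal}(E/F)}X^{\sigma}\subset\prod_{\sigma\in\mbox{Gal}(E/F)}\mathbb{P}^{n}(E),$$
with $\sigma$-component $f_{\sigma}$, and let $Z$ be the Zariski closure of its image. Because $f_{1}=\mbox{id}_{X}$, projection to the first factor is a regular inverse of $\Phi$, so $\Phi:X\to Z$ is a birational isomorphism. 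The cocycle relation, read as $f_{\sigma_{i}}^{\tau}\circ f_{\tau}=f_{\tau\sigma_{i}}$, shows that $\Phi^{\tau}\circ f_{\tau}$ equals $\Phi$ followed by the permutation $P_{\tau}$ of the $m$ factors corresponding to left translation by $\tau$ on the index set $\mbox{Gal}(E/F)$; since $f_{\tau}$ is dominant, passing to closures yields $Z^{\tau}=P_{\tau}(Z)$ for every $\tau$. The permutations $P_{\tau}$ are automorphisms of $\prod_{\sigma}\mathbb{P}^{n}$ defined over the prime field, so (after replacing $P_{\tau}$ by $P_{\tau^{-1}}$ if needed so that $\tau\mapsto P_{\tau}$ is a group homomorphism) the rule $\tau\cdot v:=P_{\tau^{-1}}(v^{\tau})$ is a semilinear left action of $\mbox{Gal}(E/F)$ on $\prod_{\sigma}\mathbb{P}^{n}(E)$: it is the Galois action of an $F$-form $W$ of $\prod_{\sigma}\mathbb{P}^{n}$ — concretely a closed subvariety of the Weil restriction $\mbox{Res}_{E/F}\mathbb{P}^{n}_{E}$, hence projective over $F$. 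The identity $Z^{\tau}=P_{\tau}(Z)$ says precisely that $Z$ is invariant under this action, and since $F<E$ is finite Galois, Galois descent for closed subvarieties produces a variety $Y$ over $F$ with $Y\times_{F}E=Z$. Therefore $X\simeq Z\simeq Y\times_{F}E$, i.e.\ $F$ is a field of definition for $X$.

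The routine ingredients — functoriality of $(\cdot)^{\sigma}$, the bookkeeping of indices, a Segre embedding to realize $W$ inside a single $\mathbb{P}^{N}(F)$ — are harmless. The step I expect to be the real obstacle, and the one that must be stated with care, is identifying $\prod_{\sigma}\mathbb{P}^{n}$ equipped with the twisted action $v\mapsto P_{\tau^{-1}}(v^{\tau})$ as an honest projective $F$-variety and then invoking Galois descent for its invariant subvariety $Z$; this is exactly Weil's cocycle argument. For the applications in this paper $X$ will be a smooth curve, so one may replace ``birational'' by ``biregular'' isomorphism throughout, eliminating any concern about indeterminacy loci; alternatively one may run the descent on the function field $E(X)$, where $(f_{\sigma}^{\ast})$ is a $1$-cocycle and the vanishing of $H^{1}$ for linear Galois descent (Speiser/Hilbert 90) yields $K:=E(X)^{\mbox{Gal}(E/F)}$ with $K\otimes_{F}E\cong E(X)$, hence a model of $X$ over $F$.
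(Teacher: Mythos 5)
The paper does not actually prove this statement: it is quoted as Weil's theorem with a citation to \cite{Weil}, so there is no internal proof to compare against; your proposal supplies the classical argument, and in outline it is correct. The necessity direction, with $f_{\sigma}=(\phi^{\sigma})^{-1}\circ\phi$ and the verification $f_{\tau}^{\sigma}\circ f_{\sigma}=(\phi^{\sigma\tau})^{-1}\circ\phi=f_{\sigma\tau}$, is exactly right under the convention $(X^{\tau})^{\sigma}=X^{\sigma\tau}$, and your index bookkeeping in the sufficiency direction ($\Phi^{\tau}\circ f_{\tau}=P_{\tau}\circ\Phi$, hence $Z^{\tau}=P_{\tau}(Z)$ after taking closures, using dominance of $f_{\tau}$) is also correct. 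The one place where you should be careful not to argue in a circle is the step you yourself flag: ``Galois descent for closed subvarieties'' of the twisted product is, in essence, the theorem being proved, so it must be reduced to something elementary. The clean way is the one you hint at: the twisted product is the base change of the projective $F$-variety $\mathrm{Res}_{E/F}\mathbb{P}^{n}_{E}$, so after an $F$-rational projective embedding $W\hookrightarrow\mathbb{P}^{N}_{F}$ the subvariety $Z$ becomes a closed subvariety of $\mathbb{P}^{N}_{E}$ whose homogeneous ideal is stable under the coefficientwise Galois action; since $F<E$ is finite Galois, Speiser's theorem (semilinear descent for vector spaces, applied degree by degree) produces generators of that ideal with coefficients in $F$, so $Z$ is cut out by polynomials over $F$ and is birational to $X$ via $\Phi$. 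Stated this way the descent step is linear algebra, not a black box. Your alternative route through the function field, where $(f_{\sigma}^{*})$ defines a semilinear action on $E(X)$ and Speiser again gives $K\otimes_{F}E\cong E(X)$ with $K=E(X)^{\mathrm{Gal}(E/F)}$, is a legitimate and arguably shorter proof of the same statement, and it matches the paper's birational notion of ``field of definition''; for the applications in the paper, where $X$ is a smooth projective curve, the two notions (birational versus biregular model over $F$) coincide, as you note.
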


\s


\subsection{The complex case}
We are interested in the complex case, that is,  $E=\mathbb{C}$ and
$F= \mathbb{R}$; a Galois extension of degree two. In this case, the
projective algebraic variety $X$ became in complex algebraic
variety. We will interested only in the case when $X$ is a
non-singular curve (that is a closed  Riemann surface).

We know that the  field of moduli is contained in every field of
definition, in particular, the intersection of all the fields of
definition contains the moduli field. An interesting question is to
know when the  field of moduli is a field of definition and when it
is $\mathbb{R}$ (this is equivalent to define $X$ using real
polynomials).

The  field of moduli is $\mathbb{R}$ if and only if $X$ and
$X^{\sigma}$ are birrational equivalent for every $\sigma \in
\mbox{Gal}(\mathbb{C}/ \mathbb{R})=\{id, z \mapsto \overline{z} \}$.
It is clear that the  field of moduli is ${\mathbb R}$ if and only
if $X$ admits an anticonformal automorphism. For the other hand, if
$X$ can be defined by real polynomial, then $J_n :
\mathbb{P}^n(\mathbb{C}) \longrightarrow \mathbb{P}^n(\mathbb{C})$
defined by  $J([x_{0},\ldots,x_{n}])=[\overline{x_{0}},
\ldots,\overline{x_{n}}]$, induces an order two anticonformal
automorphism for $X$. Reciprocally, if $X$ admits and order two
anticonformal automorphism $\tau$, it is not dificult to prove that
$\{f_e=id, f_{\sigma} =\tau \circ J_n \}$ satisfices the Weil
Theorem conditions.

\subsection{Examples: Curves}
We have already noted the cases when the Riemann surfaces are of
genus $0$ and $1$. For genus $g \ge 2$ the problem is more
complicated. Shimura and Earle provided the first examples of
algebraic curves with  field of moduli ${\mathbb R}$ but which are
not definable using real polynomials. We proceed to recall these
examples.

\s
\begin{enumerate}

\item[(1)] \it{ \bf Earle's example} \cite{Earle}: Let $a \in ( - \infty, -(3+ \sqrt{2}) )$ and $b \in \mathbb{H}^2$ with $|b|^2=-a$. Consider $$  p(x,y,z)= y^2z^3-x(x-z)(x-az)(x^2-b^2z^2) $$and $X \subset \mathbb{P}^2(\mathbb{C})$ the hyperelliptic algebraic curve defined by $p$. Then the field of moduli of $X$ is ${\mathbb R}$ and it cannot be defined over ${\mathbb R}$.

\s

\item[(2)] \it{\bf Shimura's example} \cite{Shimura}: Let $a_0 \in \mathbb{R}$, $a_m=1,\, a_1, a_2, \ldots, a_{m-1} \in \mathbb{C} $ and $m$ an odd positive integer so that the set $$C= \{ a_0, a_1, \ldots, a_{m-1}, \overline{a_1}, \ldots, \overline{a_{m-1}} \}  $$will be algebraically independient over $\mathbb{Q}$. We let consider$$p(x,y,z)= y^2z^{2m-2}-a_0x^mz^m - \sum_{j=1}^m \left(  a_jx^{m+j}x^{m-j}+(-1)^j \overline{a_j}x^{m-j}z^{m+j}  \right)$$
and $X \subset \mathbb{P}^2(\mathbb{C}) $ the plane algebraic curve defined by $p$. Then the field of moduli of $X$ is ${\mathbb R}$ and it cannot be defined over ${\mathbb R}$.
\end{enumerate}

For details and explicit anticonformal automorphisms see \cite{Earle} and \cite{Shimura}. The Shimura and Earle examples are hyperelliptic algebraic curves. The first non-hyperelliptic example was due by R. Hidalgo en \cite{Hidalgo}.

\s

\begin{enumerate}
\item[(3)] \it{\bf Hidalgo's example} \cite{Hidalgo}:
Let $\lambda_1 \in \mathbb{R}$,
$\lambda_2 \in \mathbb{C}$ so that $\lambda_1<-(3+2\sqrt{2})$,
$\mbox{Im}(\lambda_2)>0$, $\mbox{Re}(\lambda_2)<0$,
$|\lambda_2|^2=-\lambda_1$ and $X$ the non-singular projective algebraic curve defined by
$$ X :   \left \{ \begin{array}{lllllll}
x_1^2 & + & x_2^2 & + & x_3^2 & =  & 0\\
\lambda_1 x_1^2 & + & x_2^2 & + & x_4^2 & =  & 0\\
\lambda_2 x_1^2 & + & x_2^2 & + & x_5^2 & =  & 0\\
- \lambda_{2} x_1^2 & + & x_2^2 & + & x_{6}^2 & =  & 0\\\end{array} \right\} $$
in $\mathbb{P}^5(\mathbb{C})$. Then $X$ is a non-hyperelliptic closed Riemann surface genus 17 which admits an anticonformal automorphism order 4 but does not admit an anticonformal involution. In particular, the field of moduli of $X$ is ${\mathbb R}$ but it cannot be definable over ${\mathbb R}$.

\end{enumerate}

We note that Hidalgo's example is a generalized Fermat curve type
$(2,5)$. An important fact in this example   is the uniqueness of
the generalized Fermat group inside the conformal automorphisms full
group.

Next result will be frequently used in the proofs.

\s
\noindent
\begin{prop}\label{involucion}
Let $k \geq 3$ be an odd integer and $(S,H)$ be a generalized Fermat
pair of type $(k,n)$, where $n \geq 2$. If the orbifold $S/H$
admits an anticonformal involution, then $S$ has field of moduli
${\mathbb R}$ and it is a field of definition.
\end{prop}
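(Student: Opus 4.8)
The plan is to lift an anticonformal involution of the orbifold $S/H$ to an anticonformal \emph{involution} of $S$ itself, and then invoke Weil's Theorem. Indeed, once $S$ is known to admit an anticonformal involution, the discussion of the complex case above shows that $\mathbb{R}$ is a field of definition for $S$; and since the field of moduli is contained in every field of definition and always contains $\mathbb{R}$, it is then equal to $\mathbb{R}$. If $n=2$ there are no parameters and $S$ is conformally equivalent to the classical Fermat curve $\{x_1^k+x_2^k+x_3^k=0\}\subset\mathbb{P}^2(\mathbb{C})$, which is defined over $\mathbb{Q}$, so nothing is to be proved; we may therefore assume $n\ge 3$, in which case $S$ is hyperbolic. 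By \cite{Fermat} we may write $S=\mathbb{H}^2/\Gamma'$, where $\Gamma$ is a Fuchsian group uniformizing $\mathcal{O}:=S/H$ and $\Gamma'=[\Gamma,\Gamma]$, so that $H\cong\Gamma/\Gamma'\cong\mathbb{Z}_k^n$ is precisely the deck group of the branched covering $\pi\colon S\to\mathcal{O}$.

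Let $\tau$ be an anticonformal involution of $\mathcal{O}$. Being an orbifold automorphism of $\mathbb{H}^2/\Gamma$, it is induced by an anticonformal homeomorphism of $\mathbb{H}^2$ normalizing $\Gamma$; since $\Gamma'$ is characteristic in $\Gamma$, that homeomorphism also normalizes $\Gamma'$ and hence descends to an anticonformal automorphism $\widehat{\tau}\in\mathrm{Aut}(S)$ with $\pi\widehat{\tau}=\tau\pi$. (On the model $C(\lambda_1,\dots,\lambda_{n-2};k)$ such a lift has the explicit form $[x_1:\cdots:x_{n+1}]\mapsto[c_1\overline{x_{\sigma^{-1}(1)}}:\cdots:c_{n+1}\overline{x_{\sigma^{-1}(n+1)}}]$, with $\sigma\in\mathbf{S}_{n+1}$ the permutation of the cone points induced by $\tau$, exactly as in Remark~\ref{notalift}.) Because $\tau^2=\mathrm{id}$, the conformal automorphism $h_0:=\widehat{\tau}^{\,2}$ satisfies $\pi h_0=\pi$, so $h_0$ lies in the deck group $H$; likewise $\widehat{\tau}H\widehat{\tau}^{-1}=H$, so conjugation by $\widehat{\tau}$ defines an automorphism $\alpha$ of $H$. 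As $H$ is abelian, $\alpha^2$ is conjugation by $h_0$ and hence trivial, and $\alpha(h_0)=\widehat{\tau}h_0\widehat{\tau}^{-1}=\widehat{\tau}^{\,2}=h_0$.

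The decisive step is to correct $\widehat{\tau}$ by a deck transformation so that it becomes an involution, and this is where the hypothesis that $k$ is \emph{odd} is used. Since $\gcd(2,k)=1$, the squaring map is a bijection of $H\cong(\mathbb{Z}/k)^n$; let $h\in H$ be the unique element with $h^2=h_0^{-1}$. Applying $\alpha$ and using $\alpha(h_0)=h_0$ gives $\alpha(h)^2=h_0^{-1}$, so $\alpha(h)$ is also a square root of $h_0^{-1}$ and therefore $\alpha(h)=h$, i.e. $\widehat{\tau}^{-1}h\widehat{\tau}=h$. Then the anticonformal automorphism $\eta:=\widehat{\tau}\circ h$ satisfies
$$\eta^2=\widehat{\tau}\,h\,\widehat{\tau}\,h=\widehat{\tau}^{\,2}\,(\widehat{\tau}^{-1}h\widehat{\tau})\,h=h_0\,h\,h=h_0\,h^2=h_0\,h_0^{-1}=\mathrm{id},$$
so $\eta$ is an anticonformal involution of $S$. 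By Weil's Theorem, $\mathbb{R}$ is a field of definition for $S$, and therefore the field of moduli of $S$ equals $\mathbb{R}$.

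The existence of the lift $\widehat{\tau}$ and the fact that $\widehat{\tau}^{\,2}$ and the $\widehat{\tau}$-conjugates of deck transformations are again deck transformations are routine consequences of $\Gamma'$ being characteristic in $\Gamma$; the only genuinely essential ingredient is the parity of $k$, which makes squaring invertible on $H$ and thus allows the lift to be fixed into an involution. That the assumption cannot be dropped for even $k$ is seen in Hidalgo's example \cite{Hidalgo} of type $(2,5)$, where the base orbifold has an anticonformal involution but every anticonformal lift to $S$ has order $4$.
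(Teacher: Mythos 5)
Your proof is correct and follows essentially the same route as the paper: lift the anticonformal involution of $S/H$ to an anticonformal automorphism of $S$ via the homology-cover property, note that its square lies in $H$, use that $|H|=k^{n}$ is odd to manufacture an anticonformal involution of $S$, and conclude by Weil's theorem. The only difference is the finishing move: the paper simply takes the odd power $\tau^{s}$ of the lift, where $s$ is the (odd) order of $\tau^{2}\in H$, while you twist the lift by the unique square root in $H$ of $(\widehat{\tau}^{2})^{-1}$ after checking it commutes with the lift --- slightly longer, but equally valid.
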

\begin{proof}
Let $\widehat{\tau}:S/H \to S/H$ be an anticonformal involution.
Then, as $S$ is the homology cover of $S/H$, the involution
$\widehat{\tau}$ lifts as an anticonformal automorphism $\tau:S \to
S$. So the field of moduli of $S$ is ${\mathbb R}$. As
$\widehat{\tau}$ has order two, it follows that $\tau^{2} \in H$. As
$H$ has odd order, then $\tau^{2}$ is either the identity or it has
odd order, say $s$. In the last case, $\tau^{s}$ is an anticonformal
involution. The existence of an anticonformal involution is
equivalent for $S$ to be real.
\end{proof}

\section{Main results}
In this section, we will always consider the extension $\mathbb{R} <
\mathbb{C}$ and we set $\mbox{M}(X)$ instead of
$\mbox{M}_{\mathbb{C}/ \mathbb{R}}(X)$ for simplicity.

Let $(S,H)$ a generalized Fermat pais of type $(k,n)$ and let
$\widehat{\tau}$ be an anticonformal automorphism (as orbifold) of
$S/H$ of even order $2M$ with $M \ge 1$. Let $P \subset
\widehat{\mathbb C}$ be the set of cone points of the Riemann
orbifold $S/H$. As $\widehat{\tau}$ must  keep invariant $P$ and $P$
has cardinality $n+1$,  we may assume (after conjugation by a
suitable M\"obius transformation) that
$\widehat{\tau}(z)=\mbox{e}^{i\theta}/\overline{z}$ with $\theta = 2
\pi /N$, (rotation and reflection in $\mathbf{S}^1$). In this case,
if $N$ is odd, then $N=M$ and if $N$ is even, then $2M=N$.

We consider the action of the cyclic group $\langle
\widehat{\tau}\rangle \cong {\mathbb Z}_{2M}$ over the set $P$.
\begin{enumerate}
\item If $N \geq 3$ is odd, then we have exactly $3$ possible types of orbits: $C \in \{0,1\}$ orbits of length $2$, $B$ orbits of length $N$ and $A$ of length $2N$.

\item If $N \geq 4$ is even, then we have exactly $2$ possible types of orbits: $C \in \{0,1\}$ orbits of length $2$, and $B$ orbits of length $N$. In this case we set $A=0$.

\item If $N=1$, then we have exactly $2$ possible types of orbits: $A$ orbits of length $2$ and $B$ orbits of length $1$. In this case we set $C=0$.

\item If $N=2$, then we have  $B$ orbits of length $2$. In this case we set $A=C=0$.

\end{enumerate}

It is clear from the definition of $A$, $B$ and $C$ that
$$(*) \quad n+1= 2NA + NB + 2C.$$

We will make use of this in the rest of the paper.

\subsection{}
Our first result concerns Hidalgo's example.
\begin{theo}
Let $(S,H)$ a generalized Fermat pair of type $(k,5)$ with $k \ge 2$
of the form
 $$C_k :  \left \{ \begin{array}{lllllll}
\,\,\,\,\,\,\,x_1^k & + & x_2^k & + & x_3^k & =  & 0\\
\,\,\,\,\lambda_1 x_1^k & + & x_2^k & + & x_4^k & =  & 0\\
\,\,\,\, \lambda_2 x_1^k & + & x_2^k & + & x_5^k & =  & 0\\
- \lambda_2 x_1^k & + & x_2^k & + & x_6^k & =  & 0\\\end{array}
\right\}
$$
with $\lambda_1 \in \mathbb{R}$ and $\lambda_2 \in \mathbb{C}$ so that $\lambda_1 = - | \lambda_2 |^2$.
Then the field of moduli of $C_{k}$ is ${\mathbb R}$. Moreover, if $k$ is odd then $C_k$ is real.
\end{theo}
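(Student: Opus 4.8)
The plan is to produce an explicit anticonformal automorphism of the curve $C_k$ by combining complex conjugation $J_5$ on $\mathbb{P}^5(\mathbb{C})$ with a linear permutation of the coordinates that repairs the change in the $\lambda$-parameters caused by conjugation. Applying $J_5$ to the defining system replaces $\lambda_1$ by $\overline{\lambda_1}=\lambda_1$ (since $\lambda_1\in\mathbb{R}$) and $\lambda_2$ by $\overline{\lambda_2}$. Because $\lambda_1 = -|\lambda_2|^2 = -\lambda_2\overline{\lambda_2}$, the conjugated curve $\overline{C_k}$ has parameters $(\lambda_1,\overline{\lambda_2})$, and one checks (as in Remark \ref{notalift}, reading off the equations that define $C_k$) that swapping the roles of the two cone points $\lambda_2,-\lambda_2$ — equivalently multiplying through suitable equations and relabeling $x_5,x_6$, together with a rescaling of $x_1$ by a factor whose $k$-th power is $-1/\lambda_2$ or similar — identifies $\overline{C_k}(\lambda_1,\overline{\lambda_2})$ with $C_k(\lambda_1,\lambda_2)$. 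Composing this coordinate change with $J_5$ gives the desired anticonformal automorphism $\tau$, so $\mbox{M}(C_k)=\mathbb{R}$ by the discussion in the complex-case subsection.

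First I would record the permutation $\sigma$ of the cone set $\{\infty,0,1,\lambda_1,\lambda_2,-\lambda_2\}$ induced by conjugation-plus-correction: it fixes $\infty,0,1,\lambda_1$ and transposes $\lambda_2\leftrightarrow-\lambda_2$ (this uses that $\lambda_1,-\lambda_2\mapsto$ themselves once we recall $\overline{\lambda_2}$ and $-\lambda_2$ must be matched up via the relation $\lambda_1=-|\lambda_2|^2$). Then I would solve for the constants $c_1,\dots,c_6$ (normalizing $c_1=1$ is not available here because the parameter correction forces a genuine rescaling of $x_1$; instead I would normalize some other coordinate, or allow $c_1\neq 1$ and solve the resulting system) so that $[x_1:\cdots:x_6]\mapsto[c_1\overline{x_1}:\cdots:c_6\overline{x_{\sigma^{-1}(6)}}]$ carries $C_k$ to itself. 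The existence of such constants is exactly the content of the uniqueness/lifting machinery: since conjugation sends $C_k$ to another generalized Fermat curve of the same type with conformally equivalent quotient orbifold (the Möbius map fixing $\{\infty,0,1\}$ and sending $\overline{\lambda_2}\mapsto$ the appropriate point exists precisely because of the modulus condition on $\lambda_1,\lambda_2$), Proposition (the second one, from \cite{Fermat}) guarantees a conformal isomorphism, which must be linear-monomial in these coordinates.

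For the second assertion, suppose $k$ is odd. Then $H\cong\mathbb{Z}_k^5$ has odd order. I would argue that the anticonformal automorphism $\tau$ constructed above descends to an anticonformal automorphism $\widehat\tau$ of the orbifold $S/H$ (it normalizes $H_0=\Theta(\mathbb{Z}_k^5)$ because it permutes coordinates and rescales, hence conjugates each generator $a_j$ to a generator), and that $\widehat\tau$ can be chosen to be an involution on $S/H=\widehat{\mathbb C}$ with six marked points: indeed $\widehat\tau$ acts as $z\mapsto\overline{z}$ or a conjugate of it on the sphere after the relabeling, and since it already fixes the three real-type points and swaps the conjugate pair, its square is the identity on $S/H$ (or has odd order and we replace it by an odd power, exactly as in the proof of Proposition \ref{involucion}). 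Then Proposition \ref{involucion} applies verbatim: $S$ has field of moduli $\mathbb{R}$ and is real.

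The main obstacle I anticipate is the bookkeeping in the first paragraph: getting the coordinate permutation and the rescaling constants exactly right so that $J_5$ composed with the monomial map is genuinely an \emph{automorphism} of $C_k$ (not merely an isomorphism onto a differently-presented model), and in particular verifying that the modulus hypothesis $\lambda_1=-|\lambda_2|^2$ is the precise condition that makes the Möbius map on the base orbifold — hence the lift — exist. A secondary subtlety, relevant only to the ``moreover'' clause, is confirming that the descended map $\widehat\tau$ really is (conjugate to) an involution of the punctured sphere rather than an anticonformal map of higher even order; here the fact that the six points split as $\{\infty,0,1,\lambda_1\}$ (all $\widehat\tau$-fixed or paired within themselves) plus the transposed pair $\{\lambda_2,-\lambda_2\}$ makes the order-two claim transparent, and the parity argument of Proposition \ref{involucion} handles the possible discrepancy between $\widehat\tau$ and its lift.
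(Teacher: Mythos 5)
Your overall strategy (compose $J_5$ with a monomial coordinate change that repairs the conjugated parameters, as in Remark \ref{notalift}) is the paper's strategy, but the concrete step you build everything on is wrong: the permutation you propose does not exist. If the coordinate change fixes $x_1,x_2,x_3,x_4$ (up to scalars) and only swaps $x_5,x_6$, then the induced anticonformal map on the base orbifold fixes $\infty,0,1$, hence is $z\mapsto\overline{z}$; this preserves the cone set $\{\infty,0,1,\lambda_1,\lambda_2,-\lambda_2\}$ only when $\overline{\lambda_2}=\pm\lambda_2$, i.e.\ $\lambda_2$ real or purely imaginary, which is not assumed. You can see the same failure algebraically: imposing that $[x_1:\cdots:x_6]\mapsto[c_1\overline{x_1}:c_2\overline{x_2}:c_3\overline{x_3}:c_4\overline{x_4}:c_5\overline{x_6}:c_6\overline{x_5}]$ preserve the first two equations forces $c_1^k=c_2^k=c_3^k=c_4^k$, and then the third equation forces $\lambda_2=-\overline{\lambda_2}$; no rescaling of $x_1$ (your ``$c_1^k=-1/\lambda_2$ or similar'') can be absorbed, and the hypothesis $\lambda_1=-|\lambda_2|^2$ never enters. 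The relation $\lambda_1=-|\lambda_2|^2$ does not match $\overline{\lambda_2}$ with $-\lambda_2$ via the identity; it matches them via the inversion $z\mapsto\lambda_1/z$, since $\lambda_1/\overline{\lambda_2}=-\lambda_2$. Consequently the correct base map is the anticonformal involution $z\mapsto\lambda_1/\overline{z}$ (this is what the paper uses), which induces the permutation $(1\,2)(3\,4)(5\,6)$ on the cone points, and the lift has the shape $[\overline{x_2}:c_2\overline{x_1}:c_3\overline{x_4}:c_4\overline{x_3}:c_5\overline{x_6}:c_6\overline{x_5}]$ with $c_2^k=c_4^k=\lambda_1$, $c_3^k=1$, $c_5^k=\lambda_2$, $c_6^k=-\lambda_2$. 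With your $\sigma$ the constants $c_j$ simply cannot be solved for, so the appeal to the lifting/uniqueness machinery does not rescue the construction: that machinery lifts a genuine isomorphism of the quotient orbifolds, and no M\"obius map fixing $\{\infty,0,1\}$ carries the conjugated cone set to the original one for generic $\lambda_2$.

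Once the correct $\tau$ is in place, your treatment of the ``moreover'' clause is essentially sound and close to the paper: the base map is an honest anticonformal involution (note it is $z\mapsto\lambda_1/\overline{z}$, fixed-point free since $\lambda_1<0$, not a conjugate of $z\mapsto\overline{z}$, but only its order matters), so for odd $k$ Proposition \ref{involucion} gives that $S$ is real; the paper argues equivalently by choosing the constants so that $\widehat{f}^{\,k}$ is an anticonformal involution.
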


\begin{proof}
Let us consider
the anticonformal automorphism, of order two, $f(z)= \lambda_1 / \overline{z}$ of
$S/H$. We note that $f$ defines the permutation $\sigma = (1 \, 2)(3
\, 4)(5 \, 6)$ in the cone points locus. It follows that, see Remak \ref{notalift},  the lift of
$f$ is of the form
$$\widehat{f}([x_1:x_2:x_3:x_4:x_5:x_6])=[\overline{x_2}: c_2
\overline{x_1}: c_3 \overline{x_4}: c_4 \overline{x_3}: c_5
\overline{x_6}: c_6 \overline{x_5}],$$
for suitable values of $c_{j}'s$. Using the algebraic equations
that defines $S$ is not difficult to see that $c_2^k=\lambda_1$,
$c_3^k=1$, $c_4^k=\lambda_1$, $c_5^k= \lambda_2$ and
$c_6^k=-\lambda_2$. 
We have that $\widehat{f}$ is anticonformal automorphism of $C_k$, so the field of moduli of $C_k$ is ${\mathbb R}$.

Moreover, if $k$ is odd, then we may
choose $c_2=c_4$, $c_3=1$ and $c_5 = -c_6$ and then 
$\widehat{f}^k$ is anticonformal involution of
$C_k$, and then it is real.
\end{proof}

\s

\subsection{}
Our second result states that classical Humbert curves with field of moduli ${\mathbb R}$ are necessarily reals.

\s
\noindent
\begin{theo}
Let $S$ a generalized Fermat curve of type $(2,4)$ (a classical Humbert curve). If the field of moduli is ${\mathbb R}$, then it is a field of definition.
\end{theo}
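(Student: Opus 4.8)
The plan is to exploit Theorem~\ref{(2,4)}, which says that for a generalized Fermat pair $(S,H)$ of type $(2,4)$ the group $H\cong\mathbb{Z}_2^4$ is \emph{unique} inside $\mbox{Aut}(S)$; in particular $H$ is normal (indeed characteristic). Suppose the field of moduli of $S$ is $\mathbb{R}$, so that $S$ admits an anticonformal automorphism $\tau$. Since $H$ is characteristic, $\tau H\tau^{-1}=H$, and therefore $\tau$ descends to an anticonformal automorphism $\widehat\tau$ of the orbifold $S/H$, which is the Riemann sphere with $5$ cone points of order $2$, i.e.\ a self-homeomorphism of $\widehat{\mathbb C}$ preserving the $5$-point cone set $P=\{\infty,0,1,\lambda_1,\lambda_2\}$. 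The first step is thus to analyze such anticonformal automorphisms of $(\widehat{\mathbb C},P)$; after conjugation by a Möbius transformation we may assume $\widehat\tau(z)=e^{i\theta}/\overline z$ as in the setup preceding the theorem, and since $|P|=5=n+1$ is odd, relation $(*)$ forces the orbit structure on $P$ under $\langle\widehat\tau\rangle$ to contain at least one fixed-point-type orbit; concretely, a short case analysis shows $\widehat\tau$ can (after the normalization) be taken to have order $2$, i.e.\ to be an anticonformal \emph{involution} of the orbifold.

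The second step is to lift back. Once we know $S/H$ admits an anticonformal involution $\widehat\tau_0$, we use the fact (as in Remark~\ref{notalift} and Proposition~\ref{involucion}) that because $S$ is the homology cover of $S/H$, the involution $\widehat\tau_0$ lifts to an anticonformal automorphism $\tau_0$ of $S$ with $\tau_0^2\in H$. The remaining task is to produce from $\tau_0$ an \emph{involutory} anticonformal automorphism of $S$: since $H\cong\mathbb{Z}_2^4$ has exponent $2$, we have $\tau_0^4=(\tau_0^2)^2=\mathrm{id}$, so $\tau_0$ already has order $1$, $2$ or $4$. If $\tau_0$ has order $2$ we are done; if it has order $4$, set $h=\tau_0^2\in H\setminus\{\mathrm{id}\}$, and we must modify $\tau_0$ by an element of $H$ to kill the obstruction. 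Here one writes, for $\eta\in H$, $(\eta\tau_0)^2=\eta\,\tau_0\eta\tau_0^{-1}\,\tau_0^2=\eta\,\tau_0(\eta)\,h$, where $\tau_0$ acts on $H$ by conjugation; since $\tau_0$ has order $4$ and $\tau_0^2=h$ is central in $\langle H,\tau_0\rangle$, the conjugation action of $\tau_0$ on $H$ is an involution, and one checks (using the explicit diagonal form of the elements of $H$ and of a diagonal-times-permutation lift $\tau_0$) that $h$ lies in the image of the map $\eta\mapsto \eta\,\tau_0(\eta)$, so a suitable $\eta$ gives $(\eta\tau_0)^2=\mathrm{id}$. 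Thus $S$ admits an anticonformal involution.

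The final step is immediate from Weil's theorem: an anticonformal involution of $S$ yields, via $\{f_e=\mathrm{id},\,f_\sigma=(\eta\tau_0)\circ J_n\}$, the cocycle condition of Weil's Theorem, so $\mathbb{R}=\mbox{M}(S)$ is a field of definition, as recalled in the subsection ``The complex case''. Alternatively one can simply invoke that an anticonformal involution forces $S$ to be definable over $\mathbb{R}$.

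I expect the main obstacle to be the second step: ruling out (or disposing of) the case where the lift $\tau_0$ of the orbifold involution has order $4$. The parity/orbit count $(*)$ handles the orbifold side cleanly, but on the surface $S$ the lift is only determined up to $H$, and one must verify that the square of some $H$-translate of $\tau_0$ is the identity. This is the exact point where Earle's and Shimura's examples fail in the hyperelliptic setting and where Hidalgo's type-$(2,5)$ example fails; the reason it succeeds here for type $(2,4)$ is the combination of $|P|=5$ being odd (forcing the orbifold map to be conjugate to a genuine involution) together with the uniqueness of $H$ from Theorem~\ref{(2,4)}, which guarantees the lift exists and that the relevant conjugation action of $\tau_0$ on $\mathbb{Z}_2^4$ is controlled. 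One has to run this group-theoretic computation carefully, using the explicit description of lifts in Remark~\ref{notalift}, to confirm that the obstruction $h=\tau_0^2$ is always a ``coboundary'' $\eta\,\tau_0(\eta)$.
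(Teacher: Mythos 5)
Your outline follows the same route as the paper: use Theorem \ref{(2,4)} to push the anticonformal automorphism down to the orbifold $S/H$, invoke $(*)$ with $n+1=5$ odd to force $N$ odd (so that $S/H$ admits an anticonformal involution), and then lift back and try to produce an anticonformal \emph{involution} on $S$ itself. The genuine gap is at exactly the step you flag as the main obstacle: you assert, but never prove, that when a lift $\tau_0$ has order $4$ the obstruction $h=\tau_0^2\in H$ is always a ``coboundary'' $\eta\,\tau_0\eta\tau_0^{-1}$ for some $\eta\in H$. This cannot follow from the general cocycle formalism or from the abstract structure of $\mathbb{Z}_2^4$ alone: in Hidalgo's type $(2,5)$ example the orbifold $S/H$ \emph{does} admit an anticonformal involution (namely $z\mapsto\lambda_1/\overline{z}$), yet no $H$-translate of its lift is an involution and $S$ is not real. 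So the solvability of your coboundary equation for type $(2,4)$ is precisely the content of the theorem, and it has to be extracted from the special geometry of configurations of five cone points, which your argument never pins down.

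The paper supplies exactly this missing piece by a finite case analysis: $(*)$ with $n+1=5$ gives $N\in\{1,3,5\}$ and $(N,A,B,C)\in\{(1,0,5,0),(1,1,3,0),(1,2,1,0),(3,0,1,1),(5,0,1,0)\}$. In the cases $(1,0,5,0)$ and $(5,0,1,0)$ all five cone points lie on a circle that can be sent to $\mathbb{R}\cup\{\infty\}$, so $C(\lambda_1,\lambda_2)$ already has real coefficients and no lifting argument is needed. In the remaining cases one normalizes $\widehat{\tau}(z)=1/\overline{z}$, which forces either $|\lambda_1|=|\lambda_2|=1$ or $\lambda_2=1/\overline{\lambda_1}$, and then writes the explicit lift
$\tau([x_1:x_2:x_3:x_4:x_5])=[\overline{x_2}:\overline{x_1}:c_3\overline{x_3}:c_4\overline{x_4}:c_5\overline{x_5}]$
(respectively with $x_4,x_5$ swapped), where $c_3^2=1$, $c_4^2=\lambda_1$, $c_5^2=\lambda_2$; it is the conditions $|\lambda_1|=|\lambda_2|=1$ (respectively the choice $c_4c_5=1$ when $\lambda_2=1/\overline{\lambda_1}$) that make $\tau^2=\mathrm{id}$ in $\mathbb{P}^4(\mathbb{C})$, i.e.\ that realize your claimed coboundary. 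To complete your proposal you would have to carry out your announced ``diagonal-times-permutation'' computation in each of these configurations (at least in those where the equations are not already real); as written, the decisive computation that separates type $(2,4)$ from type $(2,5)$ is missing.
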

\begin{proof}
Let $H\cong \mathbb{Z}_2^4$ the generalized Fermat group. Since $H$ is unique inside $\mbox{Aut}(S)$ (see Theorem \ref{(2,4)}) we have that $\mbox{Aut}(S)/H
\cong \mbox{Aut}_{Orb}(S/H)$. Since we are assuming that $M(S)=
\mathbb{R}$, there exists an anticonformal automorphism of $S$, say $\tau : S \rightarrow S$.

Let us denote by $\widehat{\tau} : S/H \rightarrow S/H$
the anticonformal automorphisms (as orbifold) induced by $\tau$. We suppose that $| \widehat{\tau}
|=2M$ for some $M \geq 1$. As already noted, the set of cone points of $S/H$, say $\mu_{1}$, $\mu_{2}$, $\mu_{3}$, $\mu_{4}$ and $\mu_{5}$, should be invariant under $\widehat{\tau}(z)=\mbox{e}^{i\theta}/\overline{z}$ with $\theta = 2 \pi /N$, where $N=M$ for $N$ odd and $N=2M$ for $N$ even.

As $n=4$, it follows from ($*$) that $2NA+NB+2C$ is odd, so $N$ is necessarily odd (that is, $N=M$).  We claim that $N \leq 5$. In fact, if $N \ge 7$ is odd, then $5=2NA + NB + 2C \ge 14A+7B +2C$. It follows that $A=B=0$. But in this case,  $5=2C \in \{0,2\}$, a contradiction.

If$N=1$, then $C=0$ and $2A + B = 5$; so $(A, B,C) \in \{ (0 , 5,0), (1, 3,0), (2, 1,0) \}$.

If $N=3$, then $6A + 3B + 2C = 5$, and this implies $(A, B, C) = (0,1,1)$.

If $N=5$, then $10A+5B+2C=5$, and this implies that $(A, B, C) = (0,1,0)$.

We sumarize the information in the following table:

\vspace{0,5 cm}

\begin{center}
\begin{tabular}{ |l | c| c | r | }
\hline N & A & B & C \\
\hline 1 & 0 & 5 & 0 \\
\hline 1 & 1 & 3 & 0 \\
\hline 1 & 2 & 1 & 0 \\
\hline 3 & 0 & 1 & 1 \\
\hline 5 & 0 & 1 & 0 \\
\hline
\end{tabular}
\end{center}
\vspace{0,5 cm}

The rest of the proof is devoted to analyze separately each case.\vspace{0,5 cm}
\begin{enumerate}
\item[(A)] Case $(N,A,B,C)=(1,0,5,0)$. In this case, all the cone points belong to the unit circle.  Using an appropriate Mobius transformation, we can side the cone points on the real axis; that is, we may assume the cone points to be $\infty$, $0$, $1$ and $\lambda_{1}, \lambda_{2} \in {\mathbb R}$. In this way, the algebraic equations that defines the curve $S$, say $C(\lambda_{1},\lambda_{2})$, is real. In this way, $\mathbb{R}$ is a field of definition for $S$.
\vspace{0,5 cm}
\item[(B)] Case $(N,A,B,C)=(1,1,3,0)$. Conjugation by a M\"obius transformation that keeps invariant the unit disc, we may assume that the cone points are given by $\mu_{1}=\infty$, $\mu_{2}=0$,
$\mu_{3}=1$, $\mu_{4}$ and $\mu_{5}$ with  $| \mu_{4}|=| \mu_{5}|=1$. We have that
$\widehat{\tau}(z)=1/\overline{z}$ induces the permutation $\sigma = (1\,2)$ in the conic point set. We will return to this case below.
\vspace{0,5 cm}
\item[(C)] Case $(N,A,B,C)=(1,2,1,0)$. Conjugation by a M\"obius transformation that keeps invariant the unit disc, we may assume that $\mu_1=\infty$,  $\mu_2 =0$, $\mu_{3}=1$, $\mu_4$ and  $\mu_5=1/\overline{\mu_{4}}$, with $0<| \mu_4|<1$. In this case
$\widehat{\tau}(z)=1/\overline{z}$, and this induce the permutation $\sigma = (1 \, 2)(4 \, 5)$. We will return to this case below.
\vspace{0,5 cm}
\item[(D)] Case $(N,A,B,C)=(3,0,1,1)$. Conjugation by a M\"obius transformation that keeps invariant the unit disc, we may assume that $\mu_1=\infty$, $\mu_2 =0$, $\mu_3= 1$, $\mu_4 = \omega$, and $\mu_5 = \omega^2$ with $\omega=e^{2 \pi i/3}$. In this case,
$\widehat{\tau}(z)=\omega / \overline{z}$, but this configuration also admits the anticonformal involution $z \mapsto 1/\overline{z}$, which induces the permutation $\sigma = (1\,2)$. Observe that  this case is a particular case of $(B)$.
\vspace{0,5 cm}
\item[(E)] Case $(N,A,B,C)=(5,0,1,0)$. In this case the five cone points belong to the unit circle.
As done in the case (A), we have that $\mathbb{R}$ is a field of definition for $S$.
\end{enumerate}
\vspace{0,5 cm}
Now, we analyze the cases (B) and (C).
\begin{enumerate}
\item Case (B). As $\tau:S \to S$ is a lifting of  $\widehat{\tau}(z)=
\frac{1}{\overline{z}} \in {\rm Aut}_{orb}(S/H)$ and the induced permutation is $\sigma = (1\,2)$, it follows that $\tau$ must have the following form (see Remark \ref{notalift})
$$\tau([x_1:x_2:x_3:x_4:x_5])=
[\overline{x_2}:\overline{x_1}:c_3\overline{x_3}:c_4\overline{x_4}:c_5\overline{x_5}],
$$
where $c_3^2=1$, $c_4^2=\lambda_1$ and $c_5^2=\lambda_2$. We can observe that
$\tau$ is an order two anticonformal automorphism of $S$, in particular, that $S$ is real.

\item Case (C). Again, as $\tau:S \to S$ is a lifting of  $\widehat{\tau}(z)=
\frac{1}{\overline{z}} \in {\rm Aut}_{orb}(S/H)$ and the induced permutation is $\sigma = (1 \, 2)(4 \, 5)$, it follows that $\tau$ must have the following form (see Remark \ref{notalift})
$$
\tau([x_1:x_2:x_3:x_4:x_5])=
[\overline{x_2}:\overline{x_1}:c_3\overline{x_3}:c_4\overline{x_5}:c_5\overline{x_4}],
$$
where $c_3^2=1$, $c_4^2=\lambda_1$ and $c_5^2=\lambda_2=1/\overline{\lambda_1}$. We choose $c_4, c_5  \in \mathbb{C}$ so that $c_4c_5=1$.
We can observe that $\tau$ is an order two anticonformal automorphism of $S$, in particular, that $S$ is real.

\end{enumerate}

\end{proof}

\subsection{}

If $(S,H)$ is a hyperbolic generalized Fermat pair of type $(p,n)$ with $p,n \ge 2$ and $p$ prime, then the generalized Fermat group $H$ is unique up to conjugation inside $\mbox{Aut}(S)$ (see Theorem \ref{unico}). This fact allows us to obtain the following.

\vspace{0,5 cm}

\begin{theo}\label{primo1}
Let $S$ a generalized Fermat curve of type
$(p,n)$ with $p \geq 3$ prime and $n \geq 2$ an even integer. If the field of moduli of $S$ is ${\mathbb R}$, then $S$ is real.
\end{theo}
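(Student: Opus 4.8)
The plan is to push an anticonformal automorphism of $S$ down to the base orbifold $S/H$ and there produce an anticonformal \emph{involution} by a parity argument on the cone points; the existence of such an involution downstairs forces $S$ to be real by Proposition~\ref{involucion}. Before starting I would dispose of the non-hyperbolic type $(p,n)=(3,2)$: in that case $S$ has genus one and the assertion (field of moduli ${\mathbb R}$ implies real) is classical, as recalled in the Introduction through the $j$-invariant. So from now on assume $(S,H)$ is hyperbolic; then $(n-1)(p-1)>2$, so Theorem~\ref{unico} applies and any two generalized Fermat groups of type $(p,n)$ on $S$ are conjugate inside $\mbox{Aut}(S)$.

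Since $\mbox{M}(S)={\mathbb R}$, there is an anticonformal automorphism $\tau\colon S\to S$. The first step is to arrange that $\tau$ normalizes $H$. I would note that $\tau H\tau^{-1}$ is again a generalized Fermat group of type $(p,n)$: it is isomorphic to ${\mathbb Z}_p^n$, and $\tau$ induces an (anticonformal) identification $S/(\tau H\tau^{-1})\cong S/H$, so the quotient orbifold has signature $(0;p,\stackrel{n+1}{\ldots},p)$. By Theorem~\ref{unico} there is a conformal $\phi\in\mbox{Aut}(S)$ with $\phi\,(\tau H\tau^{-1})\,\phi^{-1}=H$; replacing $\tau$ by $\phi\tau$, which is still anticonformal, we may assume $\tau$ normalizes $H$ and hence induces an anticonformal orbifold automorphism $\widehat{\tau}\colon S/H\to S/H$. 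As in the paragraph preceding the theorem statement, after conjugating by a suitable M\"obius transformation I would write $\widehat{\tau}(z)=e^{i\theta}/\overline{z}$ with $\theta=2\pi/N$; recall $\widehat{\tau}$ has order $2M$, with $N=M$ when $N$ is odd and $N=2M$ when $N$ is even.

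Now comes the decisive step. The orbifold $S/H$ has $n+1$ cone points, and $n+1$ is odd because $n$ is even; hence the orbit relation $(*)$, namely $n+1=2NA+NB+2C$, forces $N(2A+B)$ to be odd, so $N$ is odd. For $N$ odd one computes directly from $\widehat{\tau}(z)=e^{i\theta}/\overline{z}$, $\theta=2\pi/N$, that $\widehat{\tau}^{N}(z)=e^{iN\theta}/\overline{z}=1/\overline{z}$, which is a nontrivial anticonformal involution of $\widehat{\mathbb C}$; being a power of the orbifold automorphism $\widehat{\tau}$, it is an anticonformal involution of the orbifold $S/H$. Since $p\ge 3$ is odd, Proposition~\ref{involucion} applies to $(S,H)$ and gives that $S$ is real, as claimed.

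I expect the genuinely delicate points to be bookkeeping rather than computation: quoting the degenerate type $(3,2)$ correctly, and checking carefully in the reduction step that $\tau H\tau^{-1}$ is a bona fide generalized Fermat group (so that Theorem~\ref{unico} is applicable) and that post-composing $\tau$ with a conformal automorphism keeps it anticonformal. Once relation $(*)$ and the normal form of $\widehat{\tau}$ are in hand, the parity argument and the passage from $\widehat{\tau}$ to $\widehat{\tau}^{N}$ are immediate.
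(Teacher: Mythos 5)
Your proof is correct and follows essentially the same route as the paper: conjugate the anticonformal automorphism via Theorem~\ref{unico} so that it normalizes $H$, push it down to the orbifold $S/H$, use the parity of $n+1$ in the relation $(*)$ to force $N$ odd, and apply Proposition~\ref{involucion} to the anticonformal involution $\widehat{\tau}^{N}$. Your explicit disposal of the non-hyperbolic type $(3,2)$, where the hypothesis $(n-1)(p-1)>2$ of Theorem~\ref{unico} fails, is a small extra care that the paper's own proof omits.
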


\begin{proof}
Let $H \cong \mathbb{Z}_p^n$ be a generalized Fermat group of $S$ of type $(p,n)$. Since $M(S)= \mathbb{R}$, there is
an anticonformal automorphism $f:S\to S$. Since $f^{-1} H f$ is also a generalized Fermat group of type $(p,n)$.  As this is unique, up to conjugation by Theorem \ref{unico}, there is some $g \in \mbox{Aut}(S)$ so that
$(gf)^{-1}H (g f)=H$.

If we set $\tau : = fg$, then $\tau$ is an anticonformal automorphism of $S$ that   normalizes $H$. It follows that $\tau$  induces an anticonformal automorphism $\widehat{\tau}$ of the orbifold $\mathcal{O}=S/H$.

Since $\widehat{\tau}$ is
anticonformal, there exists $M \in \mathbb{N}$ so that $| \widehat{\tau}
|=2M$.  We set $N=M$ for $N$ odd and $N=2M$ for $N$ even.

As we are assuming $n$ even, then $n+1=2NA+NB+2C$ is odd, from which we obtain that $N$ is necessarily odd (that is, $N=M$). In this way, $\widehat{\tau}^{N}$ is an anticonformal involution and,
by Proposition \ref{involucion}, the Riemann surface $S$ is real.
\end{proof}

\s

\subsection{}

In the proof of the Theorem \ref{primo1}  we strongly uses the parity of $n$. For $n$  odd we have the following  parcial result.

\vspace{0,5 cm}

\begin{theo}
Let $(S,H)$ a generalized Fermat pais of type either $(p,3)$ or $(p,5)$ with $p>2$ prime. If the field of moduli of $S$ is ${\mathbb R}$, then $S$ is real.
\end{theo}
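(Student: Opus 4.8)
The plan is to follow the same strategy as in the proof of Theorem~\ref{primo1}: use the uniqueness (up to conjugation) of the generalized Fermat group $H\cong\mathbb{Z}_p^n$ inside $\mathrm{Aut}(S)$ (Theorem~\ref{unico}, applicable since $p>2$ prime and $n\in\{3,5\}$ force $(n-1)(p-1)>2$) to replace an arbitrary anticonformal automorphism $f$ of $S$ by an anticonformal automorphism $\tau$ that normalizes $H$, hence descends to an anticonformal automorphism $\widehat{\tau}$ of the orbifold $\mathcal{O}=S/H$. Write $|\widehat{\tau}|=2M$ and, as in the preliminaries, normalize so that $\widehat{\tau}(z)=\mathrm{e}^{i\theta}/\overline{z}$ with $\theta=2\pi/N$, where $N=M$ if $N$ is odd and $N=2M$ if $N$ is even. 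By Proposition~\ref{involucion} it suffices to produce \emph{some} anticonformal involution on the orbifold (equivalently, to get $N$ odd, so that $\widehat{\tau}^N$ works), and then lift.

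First I would dispose of the easy parity case. When $N$ is odd, relation $(*)$ gives $n+1=2NA+NB+2C$ with $N=M$, and $\widehat{\tau}^{N}$ is directly an anticonformal involution of $\mathcal O$; done by Proposition~\ref{involucion}. So the whole content is the case $N$ even, $N=2M$. Here I would run the same finiteness argument used in the Humbert proof: from $n+1=2NA+NB+2C$ with $N\ge 4$ even, large $N$ forces $A=B=0$ and $n+1=2C$, while small even $N$ (namely $N=2$, $N=4$, possibly $N=6$) leaves only finitely many triples $(A,B,C)$. Concretely, for $(p,3)$ we have $n+1=4$, and for $(p,5)$ we have $n+1=6$; I would tabulate the admissible $(N,A,B,C)$ exactly as in Theorem~2 (e.g. for $n+1=4$: $N=2$ gives $B=2$; for $n+1=6$: $N=2$ gives $B=3$, $N=6$ gives $C=3$, etc.).

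For each surviving configuration, the key observation — exactly as in case (D) of the Humbert proof — is that a cyclic configuration of cone points that is symmetric under $z\mapsto \mathrm{e}^{i\theta}/\overline z$ is \emph{also} symmetric under the genuine involution $z\mapsto 1/\overline z$ (or $z\mapsto\mathrm{e}^{i\theta'}/\overline z$ for a suitable $\theta'$ giving an honest order-two map) once one records that all orbit points lie on concentric circles about $0$ permuted by $z\mapsto 1/\overline z$. Thus each even-$N$ case reduces to an odd-$N$ (in fact $N=1$) case, giving an anticonformal involution on $\mathcal O$; Proposition~\ref{involucion} then makes $S$ real. The main obstacle I anticipate is the bookkeeping: one must check, configuration by configuration, that the positions of the $\mu_j$ forced by the $\langle\widehat\tau\rangle$-orbit structure can be normalized (via a Möbius transformation preserving the unit disc) so that $z\mapsto1/\overline z$ preserves $\{\mu_1,\dots,\mu_{n+1}\}$ — in particular handling the points $0,1$ and the relation $|\mu_j|=1$ or $\mu_j$ paired with $1/\overline{\mu_j}$ — and then verifying that the induced permutation $\sigma\in\mathbf S_{n+1}$ admits a lift $\widehat\tau$ with constants $c_j$ that can be chosen (using that $p$ is odd, so $k$-th roots can be adjusted and squared/odd-powered away) to make some power an involution. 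Since $n\in\{3,5\}$ the number of cases is small, so this is routine but must be done explicitly.
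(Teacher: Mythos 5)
Your overall skeleton is the same as the paper's (replace $f$ by $\tau=fg$ normalizing $H$ via Theorem \ref{unico}, descend to $\widehat{\tau}(z)=e^{i\theta}/\overline{z}$, use $(*)$ to reduce to finitely many $(N,A,B,C)$, produce an anticonformal involution of $S/H$ and conclude by Proposition \ref{involucion}), and your dispatch of the odd-$N$ cases by taking $\widehat{\tau}^{N}$ is correct and in fact cleaner than the paper's treatment of $N=3$. But the ``key observation'' you propose for the surviving even-$N$ cases is false as stated, and that is exactly where the real content of the theorem lies. Take the case $N=4$, $B=1$, $C=0$ for type $(p,3)$ (or $N=4$, $B=C=1$ for type $(p,5)$): after normalization the length-four orbit is $\{\lambda,\, i/\lambda,\, -\lambda,\, -i/\lambda\}$ with $\lambda>1$. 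The inversion $z\mapsto 1/\overline{z}$ sends $\lambda$ to $1/\lambda$, which is not in the set (the points of modulus $1/\lambda$ are $\pm i/\lambda$), so it does not preserve the configuration; and the only maps $z\mapsto e^{i\theta'}/\overline{z}$ preserving the set are $\pm i/\overline{z}$, which have order four, not two. So no involution of the rotation-inversion type you allow exists here, and ``reduce to $N=1$ via $1/\overline{z}$'' breaks down. The symmetry that actually saves these cases is a reflection in a line: after rotating so that one orbit point is real positive, the configuration $\{\lambda, i/\lambda,-\lambda,-i/\lambda\}$ (together with $\{0,\infty\}$ when $C=1$) is invariant under $\eta(z)=\overline{z}$, and this is the involution the paper feeds into Proposition \ref{involucion}. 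In short, for $N\equiv 2\pmod 4$ a power of $\widehat{\tau}$ already works, but for $N\equiv 0\pmod 4$ you must leave the class $e^{i\theta'}/\overline{z}$ and use a line reflection; your argument as written omits this and would fail there.

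Two smaller points. First, your sample tabulation ``$N=6$ gives $C=3$'' is inconsistent with the constraint $C\in\{0,1\}$ (an orbit of length two is necessarily $\{0,\infty\}$); the admissible case for $n+1=6$, $N=6$ is $B=1$, $C=0$, and there $\widehat{\tau}^{3}=-1/\overline{z}$ is already an involution. Second, your final worry about choosing the lift constants $c_{j}$ is unnecessary: once an anticonformal involution of the orbifold is exhibited, Proposition \ref{involucion} (which applies since $p>2$ is odd) handles the lifting to $S$ without any computation.
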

\begin{proof}
As we are assuming that $\mbox{M}(S)= \mathbb{R}$, there
exists an anticonformal automorphism  $f : S \rightarrow S$.  We have that $f^{-1}Hf$ is other generalized Fermat group of the same type, so by Theorem \ref{unico}, there is some $g \in \mbox{Aut}(S)$ so that
$(fg)^{-1}H(fg)=H$. If we set $\tau:=gh$, then $\tau$ is an anticonformal automorphism of $S$ that
normalizes $H$. In particular, $\tau$ induces an anticonformal automorphism $\widehat{\tau}$ of the orbifold
$S/H$.

As already noted, we may assume that  $\widehat{\tau}(z)=\mbox{e}^{i\theta}/\overline{z}$ with $\theta = 2 \pi /N$, where  $N\ge 1$. As $n+1=2NA+NB+2C$ and $n$ is odd, then $NB$ must be even.

 If $N \in \{1,2\}$, then $\widehat{\tau}$ has order two and, by Proposition \ref{involucion},
  it follows that $S$ is real. Now on, we assume $N \geq 3$.

\begin{enumerate}
\item Type $(p,3)$. \\
  If $N \ge 3$ odd, then $4=2NA + NB + 2C \ge 6A + 3B +2C$ and, as $C \in \{0,1\}$, this is not possible.

 If $N \ge 4$ is even, then $4=NB + 2C \ge 4B + 2C$. As $C \in \{0,1\}$, we must have that $N=4$, $B=1$ and $C=0$. Up to conjugation by a M\"obius transformation that keeps the unit disc invariant, we may assume that the cone points  are $\mu_{1} \in [1,+\infty)$, $\mu_{2}=it$, where $t \in (0,1]$, $\mu_{3}=-\mu_{1}$ and $\mu_{4}=-\mu_{2}$. In this case, the orbifold $S/H$ admits the anticonformal involution $\eta(z)=\overline{z}$. By Proposition \ref{involucion} we have that $S$ is real.

\vspace{0,5 cm}
\item  Type $(p,5)$.\\
If $N=3$ then $6 = 6A+3B+2C$, then $(A,B,C) \in \{ (1, 0, 0), (0, 2, 0)  \}$.

If $N \ge 4$ is even, then $A=0$ and $6=NB + 2C$, and this implies that either (a) $N=4$, $B=C=1$ or (b) $N=6$, $B=1$ and $C=0$.

If $N \ge 5$ is odd, then $6=2NA + NB + 2C \ge 10A + 5B + 2C$, and this implies that $A=0$. As $C \in \{0,1\}$, the quality $6=NB + 2C$ is not possible.

The following table summarizes the possible cases.

\vspace{0,5 cm}
\begin{center}
\begin{tabular}{ |c | c | c | c | }
\hline N & A & B & C \\
\hline 3 & 0 & 2 & 0 \\
\hline 3 & 1 & 0 & 0 \\
\hline 4 & 0 & 1 & 1 \\
\hline 6 & 0 & 1 & 0 \\
\hline
\end{tabular}
\end{center}
\vspace{0,5 cm}

\begin{enumerate}
\vspace{0,5 cm}
\item[(i)] If $(N, A, B, C)=(3,0,2,0)$, then $\widehat{\tau}$ has order six and there are two orbits: each one of length three. We can suppose that the orbits are $\{ 1, \omega, \omega^2 \}$ and $\{ \mu, \mu \omega, \mu \omega^2 \}$ with $\omega=e^{2 \pi i/3}$ and $\mu \in \mathbf{S}^1 - \{  1, \omega, \omega^2 \}$. This  configuration also admits the reflection in $\mathbf{S}^1$, $\eta(z)=1/ \overline{z}$, as an anticonformal involution. A  lifts of $\eta$ is an anticonformal automorphism of $S$ of order either $2$ or $2p$. As before, $S$ admits an anticonformal involution, so $S$ is real.
\vspace{0,5 cm}
\item[(ii)] If $(N, A, B, C)=(3,1,0,0)$, then $\widehat{\tau}$ has order six and there is an unique orbit of six elements. We can suppose that the orbit is $\{ \lambda,  \omega / \lambda, \omega^2 \lambda, 1/ \lambda,  \omega\lambda, \omega^2 / \lambda  \}$ with $\lambda > 1$ and $\omega=e^{2 \pi i/3}$.  It is clear that this points configuration also admits the conjugation as an anticonformal automorphism and, as above, $S$ is real.
\vspace{0,5 cm}
\item[(iii)] If $(N, A, B, C)=(4,0,1,1)$, then  $\widehat{\tau}$ has order four and there are two orbits: one of length two and one of length four. Without loss of generality, we can suposse that the orbits are $\{ 0, \infty \}$  y $\{ \lambda, i/ \lambda, - \lambda, -i/\lambda \}$, with $\lambda >1$ and $\widehat{\tau}(z)=i/ \overline{z}$. It is clear that this points configuration also admits the conjugation as an anticonformal involution and, as above, $S$ is real.\vspace{0,5 cm}
\item[(iv)] If $(N, A, B, C)=(6,0,1,0)$, then $\widehat{\tau}$ has order six and there is an unique orbit of six elements. We can suppose that the orbit is $\{ \lambda,  \omega \lambda, \omega^2 \lambda, -1/ \lambda,  -\omega/\lambda, -\omega^2 / \lambda \}$ with $\lambda \neq 1$ and $\omega=e^{2 \pi i/3}$.  It is clear that this points configuration also admits the conjugation as an anticonformal automorphism and, as above, $S$ is real.

\end{enumerate}
\end{enumerate}
\end{proof}

\end{document}